\theoremstyle{definition}
\newtheorem{definition}{Definition}
\theoremstyle{remark}
\newtheorem{remark}{Remark}
\theoremstyle{plain}
\newtheorem{theorem}{Theorem}
\newtheorem{corollary}{Corollary}
\newtheorem{lemma}{Lemma}
\newtheorem{prop}{Proposition}
\DeclareMathOperator{\im}{Im}
\DeclareMathOperator{\re}{Re}
\DeclareMathOperator{\defeq}{\stackrel{\text{def\textsuperscript{\underline{n}}}}{=}}
\DeclareMathOperator{\loc}{loc}
\DeclareMathOperator{\spans}{span}
\begin{document}

\title{Weighted composition operators on spaces of analytic functions on the complex half-plane}
\author{Andrzej S. Kucik}
\date{}
\maketitle

\begin{flushright}
School of Mathematics \\
University of Leeds \\
Leeds LS2 9JT \\
United Kingdom \\
e-mail: \ttfamily{mmask@leeds.ac.uk}

\end{flushright}

\begin{abstract}
In this paper we will show how the boundedness condition for the \emph{weighted composition operators} on a class of spaces of analytic functions on the open right complex half-plane called \emph{Zen spaces} (which include the \emph{Hardy spaces} and \emph{weighted Bergman spaces}) can be stated in terms of \emph{Carleson measures} and \emph{Bergman kernels}. In Hilbertian setting we will also show how the norms of \emph{causal} weighted composition operators on these spaces are related to each other and use it to show that an \emph{(unweighted) composition operator} $C_\varphi$ is bounded on a Zen space if and only if $\varphi$ has a \emph{finite angular derivative at infinity}. Finally, we will show that there is no compact composition operator on Zen spaces. \\
\textbf{Mathematics Subject Classification (2010).} Primary 47B32, 47B33, 47B38; Secondary 30D05, 30H10, 30H20. \\
\textbf{Keywords.} Angular derivative, Bergman kernel, Bergman space, causal operator, Carleson measure, compact operator, composition operator, essential norm, Hardy space, weighted Bergman space, weighted composition operator, Zen space.
\end{abstract}

\section{Introduction}

Let $\mathcal{L}$ be a linear space of complex-valued functions defined on a domain $\Omega \subseteq \mathbb{C}$, let $\varphi$ be a self map of $\Omega$, and let $h : \Omega \rightarrow \mathbb{C}$. The \emph{weighted composition operator}, $W_{h, \varphi}$, on $\mathcal{L}$ (corresponding to symbols $\varphi$ and $h$) is defined to be the linear map
\begin{displaymath}
	W_{h, \varphi} f := h \cdot (f \circ \varphi), \qquad \qquad f \in \mathcal{L}.
\end{displaymath}
If $h \equiv 1$, then we write $W_{1, \, \varphi} := C_\varphi$ to denote the (unweighted) composition operator on $\mathcal{L}$.
The composition operators are usually studied in the context of spaces of analytic functions (we then require that both $\varphi$ and $h$ are also analytic functions). The book \cite{cowen1995} is an excellent source in that instance. In fact, the particular case of weighted and unweighted composition operators on spaces of analytic functions defined on $\Omega = \mathbb{D} \defeq \{z \in \mathbb{C} \; : \: |z|<1 \}$, the open unit disk of the complex plane, have been studied very extensively. It follows from the Littlewood subordination principle (see \cite{littlewood1925}) that all unweighted composition operators are bounded on all Hardy spaces and weighted Bergman spaces (see \cite{cowen1995}).

In this article we would like to consider the domain
\begin{displaymath}
	\mathbb{C}_+ :=\left\{ z \in \mathbb{C} \; : \; \re(z) >0 \right\}, 
\end{displaymath}
the \emph{open right complex half-plane}. Unlike the case of Hardy or weighted Bergman spaces on $\mathbb{D}$, not every analytic composition operator is bounded on Hardy or weighted Bergman spaces on $\mathbb{C}_+$. This was proved in \cite{elliott2012} by S. Elliott and M. T. Jury (for the Hardy spaces) and by S. Elliott and A. Wynn in \cite{elliott2011} (for weighted Bergman spaces). In particular, they have found the expression for the norm of composition operators for these spaces and shown that the composition operators must never be compact in this context. Some further results concerning so-called \emph{Zen spaces} (a generalisation of weighted Bergman spaces) and weighted composition operators have been obtained in \cite{chalendar2014} and we aim to extend it in this article.

In Section \ref{sec:pre} of this paper we will introduce the Zen spaces. In Hilbertian setting, we will state their connection with weighted $L^2$ spaces on $(0, \, \infty)$ and give the formula for their reproducing kernels. In Section \ref{sec:carleson} we will define the notion of a \emph{Carleson measure} and show how it is related to boundedness of weighted composition operators. We will also use this relation to give a boundedness criterion in terms of \emph{Bergman kernels}. In Section \ref{sec:causality} we will define the concept of causality and show how boundedness of weighted and unweighted composition operators on one space can imply the boundedness on the whole class of spaces. In particular, we will use it to show that an unweighted composition operator $C_\varphi$ is bounded on a Zen space $A^2_\nu$ if and only if $\varphi$ has a \emph{finite angular derivative at infinity}. We will also estimate the norms of bounded composition operators in this case. Finally, in Section \ref{sec:compactness} we will show that there is no compact composition operator on a Zen space $A^2_\nu$.

Throughout this paper $h$ is always defined to be a holomorphic map on $\mathbb{C}_+$ and $\varphi$ a holomorphic self-map on $\mathbb{C}_+$.

\section{Preliminaries}
\label{sec:pre}

The classical Hardy and weighted Bergman spaces defined on the open unit disk of the complex plane and the open right complex half-plane may be viewed as discrete and continuous counterparts (this is discussed for example in \cite{kucik2015} and \cite{kucik2016}). The continuous case is sometimes more appropriate when we consider applications of these spaces (see \cite{jacob2013}, \cite{jacob2014} and \cite{kucik2016*}).

Let $\tilde{\nu}$ be a positive regular Borel measure on $[0, \, \infty)$ satisfying the so-called \emph{$\Delta_2$-condition}:
\begin{equation}
	R:=\sup_{r>0} \frac{\tilde{\nu}[0, \, 2r)}{\tilde{\nu}[0, \, r)}  < \infty,
	\tag{$\Delta_2$}
	\label{eq:delta2}
\end{equation}
and let $\lambda$ denote the Lebesgue measure on $i\mathbb{R}$. We define $\nu := \tilde{\nu} \otimes \lambda$ to be a positive regular Borel measure on the closed right complex half-plane $\overline{\mathbb{C}_+}~:=~[0, \, \infty) \times i\mathbb{R}$. For this measure and $1 \leq p < \infty$, a \emph{Zen space} (see \cite{jacob2013}) is defined to be:
\begin{displaymath}
	A^p_\nu := \left\{F : \mathbb{C}_+ \longrightarrow \mathbb{C} \, \text{analytic} \; : \; \left\|F\right\|^p_{A^p_\nu} := \sup_{\varepsilon >0} \int_{\mathbb{C}_+} \left|F(z+\varepsilon)\right|^p \, d\nu < \infty \right\}.
\end{displaymath}
The Zen spaces may be viewed as an extension of the class of weighted Bergman spaces or the weighted Hardy spaces $H^2(\beta)$ (these are discussed in \cite{cowen1995}) if $p=2$ (see \cite{chalendar2014}).

In \cite{jacob2013} (Proposition 2.3) it is shown that the Laplace transform ($\mathfrak{L}$) defines an isometric map
\begin{displaymath}
	\mathfrak{L} : L^2_w(0, \, \infty) \longrightarrow A^2_\nu,
\end{displaymath}
where
\begin{displaymath}
	L^2_w(0, \infty) := \left\{f : (0, \, \infty) \longrightarrow \mathbb{C} \; : \; \|f\|_{L^2_w(0, \, \infty)} := \sqrt{\int_0^\infty |f(t)|^2 w(t) \, dt} < \infty \right\},
\end{displaymath}
and
\begin{equation}
	w(t) := 2\pi \int_0^\infty e^{-2rt} \, d\tilde{\nu}(r) \qquad \qquad (\forall t>0).
	\label{eq:w}
\end{equation}

If the Laplace transform is surjective, then $A^2_\nu$ is a reproducing kernel Hilbert space, and its kernels are given by
\begin{equation}
k^{A^2_\nu}_z (\zeta) : = \int_0^\infty \frac{e^{-t(\zeta+\overline{z})}}{w(t)} \, dt \qquad \qquad ((z, \, \zeta) \in \mathbb{C}_+^2)
\label{eq:kernel}
\end{equation}
(see \cite{kucik2015}).

\section{Carleson measures and boundedness}
\label{sec:carleson}

In \cite{contreras2001} M. D. Contreras and A. G. Hern\'{a}ndez-D\'{i}az gave a necessary and sufficient condition for a weighted composition operator $W_{h, \varphi}$ to be bounded on $H^p$, the Hardy spaces on the unit disk, in terms of so-called \emph{Carleson measures}. In Lemma \ref{lemma1} and Theorem \ref{thm:first} we shall state these results for the Zen spaces, omitting the proofs, as they are conceptually the same as proofs of Lemma 2.1 and Theorem 2.2 from \cite{contreras2001}.

\begin{definition}
Let $\mu$ be a positive Borel measure on $\mathbb{C}_+$. If there exists $C(\mu)~>~0$, such that
\begin{displaymath}
	\int_{\mathbb{C}_+} |F|^p \, d\mu \leq C(\mu) \|F\|^p_{A^p_\nu} \qquad \qquad (\forall F \in A^p_\nu),
\end{displaymath}
then we say that $\mu$ is a \emph{Carleson measure} for $A^p_\nu$.
\end{definition}

The notion of a Carleson measure has been introduced by Lennart Carleson in \cite{carleson1962} to solve the corona problem, and since then has found many other applications (for example, in the context of spaces of analytic functions on the half-plane, it is used to describe the admissibility criterion for control and observation operators, see \cite{jacob2013}, \cite{jacob2014}, \cite{kucik2016}, \cite{kucik2016*}).

\begin{lemma}
\label{lemma1}
Let  $\nu$ be a positive Borel measure on $\mathbb{C}_+$ and let $g$ be a non-negative measurable function on $\mathbb{C}_+$. Then
\begin{equation}
	\int_{\mathbb{C}_+} g \, d\mu_{\nu,h,\varphi,p} = \int_{\mathbb{C}_+} |h|^p (g \circ \varphi) \, d\nu,
\label{eq:compmu}
\end{equation}
where $\mu_{\nu,h,\varphi, p}$ is given by
\begin{displaymath}
	\mu_{\nu,h,\varphi, p}(E) = \int_{\varphi^{-1}(E)} |h|^p \, d\nu,
\end{displaymath}
for each Borel set $E \subseteq {\mathbb{C}_+}$.
\end{lemma}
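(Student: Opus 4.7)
The plan is to establish this as the standard change-of-variables identity for pushforward measures, applied to the measure $d\mu := |h|^p \, d\nu$ transported by $\varphi$. Indeed, by its defining formula, $\mu_{\nu,h,\varphi,p}$ is exactly the pushforward $\varphi_*(|h|^p \, d\nu)$, i.e.\ $\mu_{\nu,h,\varphi,p}(E) = (|h|^p\,d\nu)(\varphi^{-1}(E))$ for every Borel set $E \subseteq \mathbb{C}_+$. The identity \eqref{eq:compmu} is then precisely the general statement that integrating against a pushforward measure equals integrating the pull-back of the integrand against the original measure.

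I would carry out the standard three-step measure-theoretic bootstrap. First, for $g = \chi_E$ with $E$ a Borel subset of $\mathbb{C}_+$, observe that $\chi_E \circ \varphi = \chi_{\varphi^{-1}(E)}$, so both sides of \eqref{eq:compmu} reduce to $\int_{\varphi^{-1}(E)} |h|^p \, d\nu$, which is just the definition of $\mu_{\nu,h,\varphi,p}(E)$. Second, by linearity of the integral on both sides, \eqref{eq:compmu} extends to non-negative simple functions $g = \sum_{k=1}^n c_k \chi_{E_k}$. Third, for an arbitrary non-negative measurable $g$, choose a pointwise increasing sequence $(g_n)$ of non-negative simple functions with $g_n \nearrow g$; then $g_n \circ \varphi \nearrow g \circ \varphi$ as well, and the monotone convergence theorem applied on each side of \eqref{eq:compmu} passes the identity from $g_n$ to $g$.

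There is no substantive obstacle here: the only mild subtlety is checking that $\varphi^{-1}(E)$ is Borel (which follows because $\varphi$, being holomorphic on $\mathbb{C}_+$, is continuous, hence Borel measurable) and that $|h|^p$ is Borel measurable (which follows from analyticity of $h$). Everything else is a routine instance of the abstract change-of-variables formula for pushforward measures, which is why the paper omits the proof and refers to \cite{contreras2001}.
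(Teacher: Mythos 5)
Your proof is correct and is exactly the standard pushforward/change-of-variables argument (indicators, then simple functions by linearity, then monotone convergence) that the paper itself omits while pointing to Lemma 2.1 of \cite{contreras2001}, whose proof proceeds in the same way. Nothing is missing; the measurability remarks about $\varphi^{-1}(E)$ and $|h|^p$ are adequate.
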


\begin{theorem}
\label{thm:first}
The weighted composition operator $W_{h, \varphi}$ is bounded on a Zen space $A^p_\nu$ if and only if $\mu_{\nu,h,\varphi,p}$ is a Carleson measure for $A^p_\nu$.
\end{theorem}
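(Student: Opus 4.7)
The strategy is to view Lemma \ref{lemma1} as a change-of-variables identity: taking the nonnegative measurable function $g := |F|^p$ in (\ref{eq:compmu}) gives, for every $F \in A^p_\nu$,
\begin{displaymath}
\int_{\mathbb{C}_+} |F|^p \, d\mu_{\nu,h,\varphi,p} \;=\; \int_{\mathbb{C}_+} |h|^p\, |F\circ\varphi|^p \, d\nu \;=\; \int_{\mathbb{C}_+} |W_{h,\varphi} F|^p \, d\nu.
\end{displaymath}
Thus the Carleson inequality for $\mu_{\nu,h,\varphi,p}$ is nothing but the ``$\varepsilon = 0$'' version of the boundedness inequality $\|W_{h,\varphi} F\|^p_{A^p_\nu} \leq K \|F\|^p_{A^p_\nu}$, and the whole proof reduces to reconciling this single integral with the supremum-over-$\varepsilon$ that defines the Zen norm.

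For this reconciliation I would exploit that, for any $F \in A^p_\nu$, the function $r \mapsto N_F(r) := \int_{\mathbb{R}} |F(r+iy)|^p \, dy$ is non-increasing on $(0, \infty)$. This is a standard consequence of the subharmonicity of $|F|^p$ together with the Poisson representation of $F$ on each vertical translate $\varepsilon + \mathbb{C}_+$ of the half-plane. Writing
\begin{displaymath}
\int_{\mathbb{C}_+} |F(z+\varepsilon)|^p \, d\nu(z) \;=\; \int_0^\infty N_F(r+\varepsilon) \, d\tilde{\nu}(r),
\end{displaymath}
the monotone convergence theorem as $\varepsilon \to 0^+$ then yields $\|F\|^p_{A^p_\nu} = \int_{\mathbb{C}_+} |F|^p \, d\nu$.

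With this identification in hand, both implications are immediate from the first displayed identity. If $\mu_{\nu,h,\varphi,p}$ is Carleson with constant $C(\mu)$, then
\begin{displaymath}
\|W_{h,\varphi} F\|^p_{A^p_\nu} \;=\; \int_{\mathbb{C}_+} |F|^p \, d\mu_{\nu,h,\varphi,p} \;\leq\; C(\mu) \|F\|^p_{A^p_\nu},
\end{displaymath}
so $W_{h,\varphi}$ is bounded with $\|W_{h,\varphi}\| \leq C(\mu)^{1/p}$. Conversely, if $W_{h,\varphi}$ is bounded, reading the same chain backwards gives the Carleson inequality with $C(\mu) \leq \|W_{h,\varphi}\|^p$. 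I expect the main obstacle to be rigorously establishing the monotonicity of $r \mapsto N_F(r)$ inside a general Zen space, as opposed to the familiar $H^p(\mathbb{C}_+)$ setting; once this is in place the theorem collapses onto Lemma \ref{lemma1}.
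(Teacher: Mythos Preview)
Your proposal is correct and is essentially the argument the paper has in mind: the paper omits the proof of Theorem~\ref{thm:first} entirely, pointing to Theorem~2.2 of \cite{contreras2001}, whose proof on the disk is precisely the change-of-variables identity $\int |F|^p\,d\mu_{\nu,h,\varphi,p}=\|W_{h,\varphi}F\|^p$ obtained from Lemma~\ref{lemma1} with $g=|F|^p$. The one genuinely new ingredient in the Zen-space setting, which you rightly isolate, is the reconciliation of the $\sup_{\varepsilon>0}$ in the definition of $\|\cdot\|_{A^p_\nu}$ with the single integral $\int_{\mathbb{C}_+}|\cdot|^p\,d\nu$; your monotonicity-of-$N_F$ argument via subharmonicity and monotone convergence is the standard way to handle this and is exactly what is implicitly assumed when the paper calls the proofs ``conceptually the same''.
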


The Carleson measures for Zen spaces are characterised in \cite{jacob2013}. Verifying the boundedness of a weighted composition operator using Carleson measures only can be very difficult in practice however. We shall therefore present a more useful approach.

\begin{definition}
Let $1\leq p< \infty$. A weighted Bergman space $\mathcal{B}^p_\alpha(\mathbb{C}_+)$ is defined to be
\begin{displaymath}
	\mathcal{B}^p_\alpha(\mathbb{C}_+) := \left\{F : \mathbb{C}_+ \longrightarrow \mathbb{C} \, \text{analytic} \; : \;  \|F\|^p_{\mathcal{B}^p_\alpha} := \int_{\mathbb{C}_+} (\re(z))^\alpha |F(z)|^p  \, \frac{dz}{\pi} < \infty \right\},
\end{displaymath}
for $\alpha > -1$, and
\small
\begin{displaymath}
	\mathcal{B}^p_{-1}(\mathbb{C}_+) := \left\{F : \mathbb{C}_+ \longrightarrow \mathbb{C} \, \text{analytic} \; : \;  \|F\|^p_{\mathcal{B}^p_{-1}} := \sup_{x>0}\int_{-\infty}^\infty |F(x+iy)|^2  \, \frac{dy}{2\pi} < \infty \right\},
\end{displaymath}
\normalsize
that is $\mathcal{B}^p_{-1}(\mathbb{C}_+) \defeq H^p(\mathbb{C}_+)$, the Hardy space on the open right complex half-plane. It is easy to see that if $\alpha>-1$ and $d\tilde{\nu}(r) = r^\alpha \, dr / \pi$, or $\alpha=-1$ and $\tilde{\nu} = \delta_0 /2\pi$ ($\delta_0$ is the Dirac delta measure in 0), then $\mathcal{B}^p_\alpha(\mathbb{C}_+) = A^p_\nu$. If $p=2$, then the weighted Bergman spaces are reproducing kernel Hilbert spaces, with kernels given by
\begin{displaymath}
	k^{\mathcal{B}^2_\alpha(\mathbb{C}_+)}_z(\zeta) \stackrel{\eqref{eq:kernel}}{=} \frac{2^\alpha(1+\alpha)}{(\overline{z}+\zeta)^{2+\alpha}} \qquad \qquad (z, \, \zeta \in \mathbb{C}_+, \; \alpha>-1)
\end{displaymath}
and
\begin{displaymath}
	k^{\mathcal{B}^2_{-1}(\mathbb{C}_+)}_z(\zeta) \defeq k^{H^2(\mathbb{C}_+)}_z(\zeta) \stackrel{\eqref{eq:kernel}}{=} \frac{1}{\overline{z}+\zeta} \qquad \qquad (z, \, \zeta \in \mathbb{C}_+).
\end{displaymath}
\end{definition}

\begin{theorem}
The weighted composition operator $W_{h, \varphi}$ is bounded on a Zen space $A^p_\nu$ if and only if there exists $\alpha \geq -1$ such that
\begin{equation}
	\Lambda(\alpha) := \sup_{z \in \mathbb{C}_+} \frac{\left\|h \cdot \left(k^{\mathcal{B}^2_\alpha(\mathbb{C}_+)}_z \circ \varphi\right) \right\|_{A^p_\nu}}{\left\|k^{\mathcal{B}^2_\alpha(\mathbb{C}_+)}_z \right\|_{A^p_\nu}} < \infty.
\label{eq:boundedkernel}
\end{equation}
\end{theorem}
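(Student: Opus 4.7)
The plan is to prove each implication by combining Theorem~\ref{thm:first} (which reduces boundedness of $W_{h,\varphi}$ to the Carleson property of $\mu_{\nu,h,\varphi,p}$) with the identity of Lemma~\ref{lemma1} applied to the test function $g = |k_z^{\mathcal{B}^2_\alpha(\mathbb{C}_+)}|^p$. The kernels $k_z^{\mathcal{B}^2_\alpha(\mathbb{C}_+)}(\zeta) = 2^\alpha(1+\alpha)(\bar z + \zeta)^{-(2+\alpha)}$ (or $(\bar z + \zeta)^{-1}$ when $\alpha = -1$) have the virtue of a completely explicit modulus, so estimates on both $\|k_z^{\mathcal{B}^2_\alpha}\|_{A^p_\nu}$ and the pointwise size of $|k_z^{\mathcal{B}^2_\alpha}|$ on Carleson boxes in $\mathbb{C}_+$ are accessible.

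For the forward direction, assume $W_{h,\varphi}$ is bounded on $A^p_\nu$. I first observe that for every sufficiently large $\alpha$, the function $k_z^{\mathcal{B}^2_\alpha}$ lies in $A^p_\nu$: indeed $|k_z^{\mathcal{B}^2_\alpha}(\zeta)| \lesssim |\bar z + \zeta|^{-(2+\alpha)}$, and a standard computation using the $\Delta_2$-condition~\eqref{eq:delta2} (splitting $[0,\infty)$ into the dyadic annuli $\{r < 2^n \re(z)\}$) shows that the $A^p_\nu$-integral converges once $p(2+\alpha)$ exceeds the doubling exponent $\log_2 R$. Fix such an $\alpha$; then applying $W_{h,\varphi}$ directly to $k_z^{\mathcal{B}^2_\alpha}$ gives $\|h \cdot (k_z^{\mathcal{B}^2_\alpha} \circ \varphi)\|_{A^p_\nu} \leq \|W_{h,\varphi}\| \cdot \|k_z^{\mathcal{B}^2_\alpha}\|_{A^p_\nu}$, which after taking the supremum in $z$ yields $\Lambda(\alpha) \leq \|W_{h,\varphi}\| < \infty$.

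For the converse, suppose $\Lambda(\alpha) < \infty$ for some admissible $\alpha \geq -1$. By Theorem~\ref{thm:first} it suffices to show that $\mu := \mu_{\nu,h,\varphi,p}$ is a Carleson measure for $A^p_\nu$. Applying Lemma~\ref{lemma1} with $g = |k_z^{\mathcal{B}^2_\alpha}|^p$ converts the hypothesis into
\begin{displaymath}
    \int_{\mathbb{C}_+} \bigl|k_z^{\mathcal{B}^2_\alpha(\mathbb{C}_+)}\bigr|^p \, d\mu \;\leq\; \Lambda(\alpha)^p \, \bigl\|k_z^{\mathcal{B}^2_\alpha(\mathbb{C}_+)}\bigr\|_{A^p_\nu}^p
    \qquad (\forall z \in \mathbb{C}_+).
\end{displaymath}
To upgrade this kernel-tested inequality to a full Carleson estimate, I would invoke the geometric characterisation of Carleson measures for Zen spaces from \cite{jacob2013}, which is of Carleson-box type: $\mu$ is Carleson iff $\mu(Q(z)) \lesssim \nu(Q(z))$ on the standard squares $Q(z) = \{\zeta \in \mathbb{C}_+ : \re(\zeta) < \re(z),\; |\im(\zeta - z)| < \re(z)\}$. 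On $Q(z)$ one has the uniform lower bound $|k_z^{\mathcal{B}^2_\alpha}(\zeta)| \gtrsim (\re z)^{-(2+\alpha)}$, while $\|k_z^{\mathcal{B}^2_\alpha}\|_{A^p_\nu}^p$ can be computed in closed form in terms of $\re(z)$ and $\tilde\nu$, and matches $\nu(Q(z))(\re z)^{-p(2+\alpha)}$ up to a $\Delta_2$-controlled constant; plugging these in gives precisely $\mu(Q(z)) \lesssim \nu(Q(z))$.

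The main obstacle is the converse. Two separate technical points need care: first, showing the kernel norm $\|k_z^{\mathcal{B}^2_\alpha}\|_{A^p_\nu}^p$ is comparable to $(\re z)^{-p(2+\alpha)} \nu(Q(z))$ uniformly in $z$ (this is where the $\Delta_2$-condition on $\tilde\nu$ does the real work, since without doubling the annular sums defining the two quantities need not match); and second, making sure the Carleson-box characterisation from \cite{jacob2013} is available in the full generality of $\alpha \geq -1$, including the Hardy endpoint, so that \emph{any} admissible $\alpha$ in the hypothesis suffices. Once these two comparisons are in place, the remainder of the argument is the routine chain described above.
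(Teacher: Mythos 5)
Your proposal is correct and follows essentially the same route as the paper: the forward direction tests $W_{h,\varphi}$ on the kernels $k_z^{\mathcal{B}^2_\alpha(\mathbb{C}_+)}$, which lie in $A^p_\nu$ for $\alpha$ large (the precise threshold is $p(2+\alpha) > 1+\log_2 R$, with an extra $+1$ coming from the geometric series $\sum_k (2^{1-Np}R)^k$, rather than $p(2+\alpha)>\log_2 R$ as you state, but this does not affect your argument), and the converse combines Lemma~\ref{lemma1} and Theorem~\ref{thm:first} with the Laplace--Carleson embedding theorem of \cite{jacob2013}. The only difference is presentational: you unpack that citation into its Carleson-box form with the explicit kernel lower bound on $Q(z)$ and the comparability $\|k_z^{\mathcal{B}^2_\alpha}\|_{A^p_\nu}^p \asymp \nu(Q(z))(\re z)^{-p(2+\alpha)}$, whereas the paper invokes Theorem 2.1 of \cite{jacob2013} as a black box; the technical caveats you flag at the end (the range of admissible $\alpha$ in the converse) are real but are equally present, and equally unaddressed, in the paper's own proof.
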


\begin{proof}
By the proof of Theorem 2.1 from \cite{jacob2013} we get that there exists $N \in \mathbb{N}$ such that
\begin{equation}
	\left\|\frac{1}{(\overline{z}+\zeta)^N}\right\|^p_{A^p_\nu} \leq \nu\left(\overline{Q(z)}\right)\left(\frac{2}{\re(z)}\right)^{Np} \sum_{k=0}^\infty (2^{1-Np}R)^k,
	\label{eq:N}
\end{equation}
where $R \geq 1$ is the same quantity as in \eqref{eq:delta2}. Theorem 2.1 from \cite{jacob2013} defines $N$ to be a large natural number, but it is easy to see that this requirement is unnecessarily strict. In fact, it suffices to choose any real $N > (1+\log_2(R))/p$ to make the above sum converge. Therefore, if $\alpha > \max\{-1, \, (1+\log_2(R))/p -2\}$, we have that $k^{B^2_{\alpha}(\mathbb{C}_+)}_z$ is in $A^p_\nu$, for all $z \in \mathbb{C}_+$. So if the weighted composition operator is bounded on $A^p_\nu$, then $\Lambda(\alpha)$ must be finite. Conversely, if $\Lambda(\alpha)$ is finite for some $\alpha \geq-1$, then
\begin{displaymath}
	\int_{\mathbb{C}_+} \left|k^{B^2_{\alpha}(\mathbb{C}_+)}_z\right|^p \, d\mu_{\nu, h, \varphi, p} \stackrel{\eqref{eq:compmu}}{=} \int_{\mathbb{C}_+} \left|h \cdot \left(k^{B^2_{\alpha}(\mathbb{C}_+)}_z \circ \varphi \right)\right|^p \, d\nu \stackrel{\eqref{eq:boundedkernel}}{\leq} \Lambda_p^p \int_{\mathbb{C}_+} \left|k^{B^2_{\alpha}(\mathbb{C}_+)}_z\right|^p \, d\nu,
\end{displaymath}
so, again by Theorem 2.1 from \cite{jacob2013}, the canonical embedding \\ 
$A^p_\nu \hookrightarrow L^p(\mathbb{C}_+, \, \mu_{\nu,h,\varphi,p})$ is bounded, and hence, by Theorem \ref{thm:first}, $W_{h, \varphi}$ is bounded on $A^p_\nu$. 
\end{proof}

\begin{remark}
\label{remark:1}
If $p>1$ and $A^p_\nu = B^p_{\alpha}(\mathbb{C}_+)$ for some $\alpha \geq -1$, then we can use this $\alpha$, setting $N=\alpha+2$, to make the sum in \eqref{eq:N} converge. In this case we have that $W_{h, \varphi}$ is bounded on $B^p_{\alpha}(\mathbb{C}_+)$ if and only if
\begin{equation}
	\sup_{z \in \mathbb{C}_+} \frac{\left\|h \cdot \left(k^{\mathcal{B}^2_\alpha(\mathbb{C}_+)}_z \circ \varphi\right) \right\|_{\mathcal{B}^p_\alpha(\mathbb{C}_+)}}{\left\|k^{\mathcal{B}^2_\alpha(\mathbb{C}_+)}_z \right\|_{\mathcal{B}^p_\alpha(\mathbb{C}_+)}} < \infty.
	\label{eq:nonhilbertian}
\end{equation}
In particular, if $p=2$, \eqref{eq:nonhilbertian} is equivalent to
\begin{displaymath}
	\sup_{z \in \mathbb{C}_+} |h(z)| \left(\frac{\re(z)}{\re(\varphi(z))}\right)^{\alpha+2} < \infty,
\end{displaymath}
since $W^*_{h,\varphi}k^{\mathcal{B}^2_\alpha(\mathbb{C}_+)}_z = \overline{h(z)} k^{\mathcal{B}^2_\alpha(\mathbb{C}_+)}_{\varphi(z)}$.
\end{remark}

\begin{definition}
A sequence of points $z_n=x_n+iy_n \in \mathbb{C}_+$ is said to approach $\infty$ non-tangentially if $\lim_{n \rightarrow \infty}x_n = \infty$ and $\sup_{n \in \mathbb{N}} |y_n|/x_n < \infty$. We also say that $\varphi$ fixes infinity non-tangentially if $\varphi(z_n) \rightarrow \infty$ whenever $z_n \rightarrow \infty$ non-tangentially, and write $\varphi(\infty)=\infty$. If it is the case and also the non-tangential limit
\begin{equation}
	\lim_{z \rightarrow \infty} \frac{z}{\varphi(z)}
\label{eq:phi'}
\end{equation}
exists and is finite, then we say that $\varphi$ has a finite angular derivative at infinity and denote the above limit by $\varphi'(\infty)$.
\end{definition}

\begin{prop}[Julia-Carath\'{e}odory Theorem in $\mathbb{C}_+$ - Proposition 2.2 in \cite{elliott2012}]
\label{prop:elliott}
Let $\varphi$ be an analytic self-map on $\mathbb{C}_+$. The following are equivalent:
\begin{enumerate}
	\item $\varphi(\infty)$ and $\varphi'(\infty)$ exist;
	\item $ \sup_{z \in \mathbb{C}_+} = \frac{\re(z)}{\re(\varphi(z))} < \infty$;
	\item $ \limsup_{z \longrightarrow \infty} = \frac{\re(z)}{\re(\varphi(z))} < \infty$.
\end{enumerate}
Moreover, the quantities in 2. and 3. are both equal to $\varphi'(\infty)$.
\end{prop}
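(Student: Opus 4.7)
My plan is to transfer the problem to the unit disk via the Cayley transform $T : \mathbb{D} \to \mathbb{C}_+$, $T(w) = (1+w)/(1-w)$, which maps $1 \in \partial \mathbb{D}$ to $\infty$, and then invoke the classical Julia--Carath\'{e}odory theorem on $\mathbb{D}$ for the associated self-map $\psi := T^{-1} \circ \varphi \circ T$. A direct computation yields $\re(T(w)) = (1-|w|^2)/|1-w|^2$, so for $z = T(w)$ one obtains
\begin{displaymath}
\frac{\re(z)}{\re(\varphi(z))} = \frac{1-|w|^2}{1-|\psi(w)|^2} \cdot \frac{|1-\psi(w)|^2}{|1-w|^2}, \qquad \frac{z}{\varphi(z)} = \frac{(1+w)(1-\psi(w))}{(1-w)(1+\psi(w))},
\end{displaymath}
together with the standard fact that $z \to \infty$ non-tangentially in $\mathbb{C}_+$ is equivalent to $w \to 1$ non-tangentially in $\mathbb{D}$. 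These identities form the translation dictionary between the two pictures.

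For (1) $\Rightarrow$ (2), assuming $\varphi(\infty)=\infty$ and that $\varphi'(\infty)$ exists, the pulled-back map $\psi$ has non-tangential boundary value $1$ at $1$ and finite angular derivative $\psi'(1) = \varphi'(\infty)$ there. Julia's inequality at the boundary fixed point $1$ then gives
\begin{displaymath}
\frac{|1-\psi(w)|^2}{1-|\psi(w)|^2} \leq \psi'(1) \cdot \frac{|1-w|^2}{1-|w|^2} \qquad (w \in \mathbb{D}),
\end{displaymath}
which, after substitution into the identity above, reads $\re(z)/\re(\varphi(z)) \leq \varphi'(\infty)$ for every $z \in \mathbb{C}_+$. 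The implication (2) $\Rightarrow$ (3) is trivial since $\limsup \leq \sup$.

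For (3) $\Rightarrow$ (1), suppose $L := \limsup_{z \to \infty} \re(z)/\re(\varphi(z)) < \infty$. Taking $w = 1-\rho$ with $\rho \downarrow 0$ and substituting into the first identity forces $|1-\psi(w)|^2/(1-|\psi(w)|^2) \to 0$, in particular $\psi(w) \to 1$ radially, while $(1-|\psi(w)|)/(1-|w|)$ remains bounded by a multiple of $L$. The classical Julia--Carath\'{e}odory theorem then guarantees that $\psi$ has non-tangential limit $1$ at $1$ together with a finite angular derivative $\psi'(1) \leq L$. Translating back through $T$ yields $\varphi(\infty) = \infty$ and the existence of $\varphi'(\infty) = \psi'(1)$; combined with the bound of the previous paragraph, the quantities in (2) and (3) both coincide with $\varphi'(\infty)$.

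The main technical obstacle lies in (3) $\Rightarrow$ (1): one must convert the unrestricted half-plane $\limsup$ hypothesis into the classical non-tangential $\liminf$ condition on $\mathbb{D}$, and simultaneously rule out the a priori possibility that $\psi$ accumulates at a boundary point other than $1$ (which, via $T$, would mean $\varphi(z_n) \to iy \in i\mathbb{R}$ and hence $\re(z_n)/\re(\varphi(z_n)) \to \infty$, a contradiction). Once this identification is secured, the rest of the argument is bookkeeping against the Cayley transform identities.
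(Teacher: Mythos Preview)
The paper does not supply a proof of this proposition at all: it is quoted verbatim as Proposition~2.2 of \cite{elliott2012} and used as a black box. Your argument---pulling the problem back to $\mathbb{D}$ via the Cayley transform and invoking the classical Julia--Carath\'{e}odory theorem together with Julia's inequality at the boundary fixed point $1$---is precisely the standard route, and is essentially how Elliott and Jury obtain the result in \cite{elliott2012}. So there is nothing to compare against in the present paper, and your reconstruction of the cited proof is sound.

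A couple of minor points worth tightening in your sketch. In $(1)\Rightarrow(2)$ you invoke Julia's inequality with constant $\psi'(1)$; strictly speaking Julia's lemma is stated with the constant $d:=\liminf_{w\to 1}(1-|\psi(w)|)/(1-|w|)$, and one then uses the Julia--Carath\'{e}odory identification $d=\psi'(1)$ (valid here since $\psi(1)=1$). In the final equality step, you should make explicit that along the positive real axis $z=x\to\infty$ one has $z/\varphi(z)\to\varphi'(\infty)\in(0,\infty)$, whence $\re(z)/\re(\varphi(z))=x/\re(\varphi(x))\to\varphi'(\infty)$ as well; this gives $\limsup\geq\varphi'(\infty)$, which combined with $\sup\leq\varphi'(\infty)$ from Julia closes the chain $\varphi'(\infty)\leq\limsup\leq\sup\leq\varphi'(\infty)$. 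Otherwise the argument is complete.
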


\begin{corollary}
A composition operator $C_\varphi$ is bounded on $B^2_\alpha(\mathbb{C}_+)$ if and only if $\varphi$ has a finite angular derivative at infinity.
\end{corollary}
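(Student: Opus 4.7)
The plan is to combine Remark \ref{remark:1} applied with $h \equiv 1$ with the Julia–Carath\'eodory characterisation in Proposition \ref{prop:elliott}; the argument is essentially a chain of three equivalences with no real calculation to do.

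First, I would specialise Remark \ref{remark:1} to the unweighted case $p=2$, $h \equiv 1$, $A^2_\nu = \mathcal{B}^2_\alpha(\mathbb{C}_+)$. Since $W_{1,\varphi} = C_\varphi$ and $|h(z)| \equiv 1$, the displayed criterion from the remark collapses to
$$\sup_{z \in \mathbb{C}_+} \left(\frac{\re(z)}{\re(\varphi(z))}\right)^{\alpha+2} < \infty.$$
This gives an ``if and only if'' for boundedness of $C_\varphi$ on $\mathcal{B}^2_\alpha(\mathbb{C}_+)$ that is already stripped of any Carleson-measure machinery.

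Next, I would observe that because $\alpha \geq -1$ the exponent satisfies $\alpha+2 \geq 1 > 0$, so $t \mapsto t^{\alpha+2}$ is a strictly increasing, unbounded bijection of $(0,\infty)$ onto itself. Since the ratio $\re(z)/\re(\varphi(z))$ is strictly positive on $\mathbb{C}_+$, the finiteness of the supremum above is equivalent to the finiteness of
$$\sup_{z \in \mathbb{C}_+} \frac{\re(z)}{\re(\varphi(z))}.$$

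Finally, Proposition \ref{prop:elliott} (Julia–Carath\'eodory) identifies this last condition with the assertion that $\varphi$ has a finite angular derivative at infinity, and even records that the supremum equals $\varphi'(\infty)$. Chaining the three equivalences yields the corollary. I do not foresee any genuine obstacle: the only point worth flagging is the trivial monotonicity argument that allows the exponent $\alpha+2$ to be discarded, which is exactly where the hypothesis $\alpha \geq -1$ (rather than an arbitrary real $\alpha$) is used.
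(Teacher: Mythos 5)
Your proposal is correct and follows exactly the paper's (one-line) proof: specialise Remark \ref{remark:1} with $p=2$ and $h\equiv 1$, strip the harmless positive exponent $\alpha+2$, and invoke Proposition \ref{prop:elliott}. The only difference is that you spell out the trivial monotonicity step that the paper leaves implicit.
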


\begin{proof}
It follows from the above proposition and Remark \ref{remark:1}.
\end{proof}

This was proved (although using different approach) in \cite{elliott2012} (for the Hardy spaces $H^p(\mathbb{C}_+)$) and in \cite{elliott2011} (for weighted Bergman spaces $B^2_\alpha(\mathbb{C}_+$)). In the next section we shall prove that it also holds for all Zen spaces.

\section{Causality}
\label{sec:causality}

\begin{definition}
Let $w$ be a positive measurable function on $(0, \, \infty)$. We say that $A:L^2_{w}(0, \infty) \rightarrow L^2_{w}(0, \, \infty)$ is a \emph{causal operator} (or a \emph{lower-triangular operator}), if for each $T>0$ the closed subspace $L^2_{w}(T, \, \infty)$ is invariant for $A$. If there exists $\alpha>0$ such $AL^2_{w}(T, \, \infty) \subseteq L^2_w(T+\alpha, \, \infty)$, for all $T>0$, then we say that $A$ is \emph{strictly causal}.
\end{definition}

The following lemma was proved in \cite{chalendar2014} (Theorem 3.2) for unweighted $L^2$ spaces and we shall modify the proof to extend the result to weighted $L^2$ spaces.

\begin{lemma}
Let $w \in L^1_{\loc}(0, \, \infty)$ be positive and non-increasing. Suppose that $A : L^2_{w}(0, \infty) \rightarrow L^2_{w}(0, \infty)$ is a causal operator and $D$ is the operator of multiplication by a strictly positive, monotonically increasing function $d$. Then
\begin{equation}
	\left\|D^{-1}AD\right\|_{L^2_{w}(0, \infty)} \leq \left\|A\right\|_{L^2_{w}(0, \infty)}.
	\label{eq:dad}
\end{equation}
\label{lemma:2}
\end{lemma}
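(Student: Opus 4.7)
The plan is a two-step reduction: first reduce to the case where $d$ is a step function by a multiplicative decomposition, then further to a single two-value step, where a block-operator convexity argument closes the estimate. A general monotone $d$ is handled by approximation.

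\textbf{Single-jump estimate.} Fix $T > 0$ and $\gamma \geq 1$, and let $E = I + (\gamma - 1) P_T$, where $P_T$ is the orthogonal projection of $L^2_w(0,\infty)$ onto $L^2_w(T, \infty)$. For any bounded causal $B$ on $L^2_w(0,\infty)$, decompose with respect to $L^2_w(0,T) \oplus L^2_w(T,\infty)$; causality forces the upper-right block of $B$ to vanish, and a direct computation (using $B P_T = P_T B P_T$) gives
\begin{align*}
B &= \begin{pmatrix} B_{11} & 0 \\ B_{21} & B_{22} \end{pmatrix}, \\
E^{-1} B E &= \begin{pmatrix} B_{11} & 0 \\ \gamma^{-1} B_{21} & B_{22} \end{pmatrix} = \gamma^{-1} B + (1 - \gamma^{-1}) B^\sharp,
\end{align*}
where $B^\sharp = \operatorname{diag}(B_{11}, B_{22})$. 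Since $B_{11}$ and $B_{22}$ are compressions of $B$, we have $\|B^\sharp\| \leq \|B\|$, and convexity of the operator norm yields $\|E^{-1} B E\| \leq \gamma^{-1}\|B\| + (1 - \gamma^{-1})\|B^\sharp\| \leq \|B\|$.

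\textbf{Step functions and approximation.} A simple $d$ with increasing values $0 < c_1 \leq \cdots \leq c_n$ on consecutive intervals $[T_{k-1}, T_k)$ factors as $d = c_1 \cdot \tilde d_1 \cdots \tilde d_{n-1}$, where $\tilde d_k = \chi_{[0, T_k)} + (c_{k+1}/c_k)\chi_{[T_k,\infty)}$. Multiplication operators preserve causality under conjugation (they fix each invariant subspace $L^2_w(T,\infty)$), so iterating the single-jump estimate over the factors $\tilde D_k$ gives $\|D^{-1}AD\| \leq \|A\|$ for every step $d$. For a general monotone increasing $d$, approximate pointwise from below by step functions $d_n \nearrow d$. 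For $f \in L^2_w$ compactly supported in some $[a, M]$ with $a > 0$, dominated convergence yields $D_n f \to Df$ in $L^2_w$ and $AD_n f \to ADf$ in $L^2_w$; causality of $A$ confines these to $[a,\infty)$, where $d_n^{-1}$ is eventually uniformly bounded by $1/d(a)$, and a second dominated convergence argument gives $D_n^{-1}AD_n f \to D^{-1}ADf$ in $L^2_w$. The estimate passes to the limit, and density of such $f$ in $L^2_w(0,\infty)$ finishes the proof.

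The conceptual heart is the block-matrix convexity identity, which uses causality in its purest form (to kill the off-diagonal block $B_{12}$). The main obstacle is the approximation step: since $D$ itself may be unbounded on $L^2_w$ and $d^{-1}$ may blow up near $0$, one must carefully verify the convergence on a suitable dense domain. The hypotheses that $w \in L^1_\loc$ is non-increasing ensure both that compactly supported functions away from $0$ are dense in $L^2_w$ and that the requisite dominated convergence arguments go through cleanly.
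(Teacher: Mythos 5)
Your proof is correct, but it takes a genuinely different route from the paper's. The paper conjugates $A$ by the analytic family $D^{z}$ (multiplication by $d^{z}$), restricts to finite-dimensional spaces spanned by dyadic step functions, and runs a maximum-principle (three-lines type) argument on $\Omega(z)=D^{-z}AD^{z}$ over the right half-plane, using that $D^{iy}$ is unitary on the boundary; this forces a preliminary reduction to \emph{strictly} causal operators via the right shift $S_\alpha$, undone at the end by monotone convergence as $\alpha\to 0$. You instead isolate the purely algebraic core: with respect to $L^2_w(0,T)\oplus L^2_w(T,\infty)$ causality kills the block $B_{12}$, and the identity $E^{-1}BE=\gamma^{-1}B+(1-\gamma^{-1})B^{\sharp}$ together with $\|B^{\sharp}\|\le\|B\|$ and convexity of the norm gives the single-jump contraction (here $\gamma\ge 1$ is exactly where monotonicity of $d$ enters); iterating over the jumps of a step function and approximating a general increasing $d$ from below finishes the argument. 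Your version is more elementary and self-contained --- no complex analysis, no strict-causality detour --- and it makes completely transparent which hypothesis does what; the paper's version follows the Chalendar--Partington template and transfers more directly to situations where one wants the full analytic family $D^{-z}AD^{z}$. Both proofs ultimately need a limiting/density step; yours is the pointwise approximation $d_n\nearrow d$ on functions supported in $[a,M]$, theirs is the union of the dyadic spaces $X_N$ plus the $\alpha\to 0$ limit. The only places where you should be slightly careful are routine and you have flagged them: choose the step approximants $d_n$ strictly positive with $d_n\le d$ on $[a,\infty)$ for $n$ large (so that the uniform lower bound $d_n\ge d(a)/2$, say, is available on the support of $AD_nf$), and note that pointwise convergence of $d_n$ to $d$ holds off the countable set of discontinuities of $d$, which is Lebesgue-null and hence $w\,dt$-null.
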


\begin{proof}
Suppose first that $A$ is strictly causal for some $\alpha>0$. For $\re(z)\geq 0$ define $\Omega(z) = D^{-z}AD^z$, where $D^z$ is the operator of multiplication by the complex function $d^z$. For each $N \in \mathbb{N}$ such that $N \geq \log_2\alpha$ let
\begin{displaymath}
	X_N := \spans \left\{e_k : = \chi_{(k/2^N, \, (k+1)/2^N)} \; : \; 1 \leq k \leq 2^{2N} \right\}.
\end{displaymath}
Let $P_N : L^2_w(0, \, \infty) \longrightarrow X_N$ denote the orthogonal projection and define \\ 
$\Omega_N(z)~=~\Omega(z)P_N$, which maps each $e_k$ to $d^{-z}Ad^ze_k$ and
\begin{displaymath}
	\left\|d^{-z}A d^z e_k\right\| \leq \|A\| \|e_k\|,
\end{displaymath}
since $d$ is increasing and $A$ is strictly causal. $X_N$ is finite dimensional, so $\Omega_N(z)$ is bounded independently of $z$, because $\|\Omega_N(z)\| \leq \|\Omega(z)|_{X_N}\|$. By the maximum principle we also have that
\begin{displaymath}
	\|\Omega_N(1)\| \leq \sup_{\re(z) \geq 0} \|\Omega_N(z)\| \leq \sup_{\re(z) = 0} \|\Omega(z)\| = \|A\|,
\end{displaymath}
and the result holds on $L^2_w(0, \, \infty)$, since $\bigcup_{N \geq \log_2 \alpha}X_N$ is a dense set therein.

If $A$ is not strictly causal, then let $S_\alpha$ denote the right shift by $\alpha$. In this case the operator $S_\alpha A$ is strictly causal and, by the above, we have
\begin{displaymath}
	\|D^{-1}S_\alpha A D\| \leq \|S_\alpha A\| = \|A\|.
\end{displaymath}
Let $d_\alpha(t) = d(\alpha+t)$. Then for each $f \in L^2_w(0, \, \infty)$ we have $\|D^{-1}S_\alpha f\|=\|d_\alpha^{-1}f\|$, and $|d_\alpha^{-1} f \sqrt{w}|$ increases to $|d^{-1}f\sqrt{w}|$ almost everywhere as $\alpha \rightarrow 0$, because the monotonically decreasing function $d^{-1}$ is continuous almost everywhere, and hence the result follows from Lebesgue's monotone convergence theorem.
\end{proof}

We will say that an operator $B : A^2_\nu \longrightarrow A^2_\nu$ is causal if the the corresponding isometric operator $\mathfrak{L}^{-1}B\mathfrak{L} : L^2_w(0, \, \infty) \longrightarrow L^2_w(0, \, \infty)$ is causal on $L^2_w(0, \, \infty)$

\begin{displaymath}
	\xymatrix{L^2_w(0, \, \infty) \ar[r]^{\mathfrak{L}^{-1} B \mathfrak{L}} \ar[d]|{\mathfrak{L}} & L^2_w(0, \, \infty) \\
						A^2_\nu \ar[r]_{B} & A^2_\nu \ar[u]|{\mathfrak{L}^{-1}}}
\end{displaymath}

From now on we will assume that the Laplace transform $\mathfrak{L}~:~L^2_w(0, \, \infty)~\rightarrow~A^2_\nu$ (where $w$ is as given in \eqref{eq:w}) is surjective. The surjectivity of $\mathfrak{L}$ in this context is discussed in \cite{zen2009}.

\begin{theorem}
Suppose that the weighted composition operator $W_{h,\varphi}$ is bounded and causal on $A^2_\nu$. Then there exists $\alpha' \geq 0$ such that for each $\alpha \geq \alpha'$  $W_{h,\varphi}$ is bounded on the weighted Bergman space $\mathcal{B}^2_{\alpha}(\mathbb{C}_+)$, and
\begin{equation}
	\left\|W_{h,\varphi}\right\|_{\mathcal{B}^2_\alpha(\mathbb{C}_+)} \leq \left\|W_{h,\varphi}\right\|_{A^2_\nu}.
\label{eq:weightedbergmanzen}
\end{equation}
\label{thm:awfultheorem}
\end{theorem}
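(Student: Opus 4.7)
The idea is to transport the problem to the Laplace side, where $W_{h,\varphi}$ on both $A^2_\nu$ and $\mathcal{B}^2_\alpha(\mathbb{C}_+)$ corresponds to the same formal operator $A := \mathfrak{L}^{-1}W_{h,\varphi}\mathfrak{L}$ on functions $(0,\infty)\to\mathbb{C}$. For $\alpha>-1$, applying \eqref{eq:w} to $d\tilde\nu_\alpha(r)=r^\alpha\,dr/\pi$ produces the weight $w_\alpha(t)=\Gamma(\alpha+1)/(2^\alpha t^{\alpha+1})$, so that $\mathfrak{L}:L^2_{w_\alpha}(0,\infty)\to\mathcal{B}^2_\alpha(\mathbb{C}_+)$ is an isometric bijection; note that $w_\alpha$ is strictly positive, non-increasing and locally integrable on $(0,\infty)$, so it satisfies the hypotheses of Lemma \ref{lemma:2}. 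By assumption, $A$ is bounded and causal on $L^2_w(0,\infty)$, and the goal is to show that it extends to a bounded operator on $L^2_{w_\alpha}(0,\infty)$ with norm at most $\|A\|_{L^2_w}$.

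Let $d(t):=\sqrt{w(t)/w_\alpha(t)}$ and let $D$ denote multiplication by $d$. Since $d^2 w_\alpha = w$, $D$ is an isometric bijection $L^2_w\to L^2_{w_\alpha}$ with inverse $D^{-1}$ (multiplication by $1/d$), and $\tilde A := DAD^{-1}$ is bounded on $L^2_{w_\alpha}$ with $\|\tilde A\|_{L^2_{w_\alpha}} = \|A\|_{L^2_w}$. Because multiplication operators preserve the support of functions, causality of $A$ on $L^2_w$ transfers to causality of $\tilde A$ on $L^2_{w_\alpha}$. Assuming $d$ is monotonically increasing (the crux, handled below), Lemma \ref{lemma:2} applied to $\tilde A$ and the multiplication operator $D$ on $L^2_{w_\alpha}$ yields $\|D^{-1}\tilde A D\|_{L^2_{w_\alpha}} \leq \|\tilde A\|_{L^2_{w_\alpha}}$. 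On simple functions with compact support in $(0,\infty)$ --- a dense subspace of $L^2_{w_\alpha}$ on which everything is defined --- the composition $D^{-1}\tilde A D$ reduces formally to $A$, and by density $A$ extends boundedly to $L^2_{w_\alpha}$ with norm at most $\|A\|_{L^2_w}$. Applying $\mathfrak{L}$ on both sides gives \eqref{eq:weightedbergmanzen}.

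The main obstacle is exhibiting $\alpha'$ such that $d$ is monotonically increasing on $(0,\infty)$ whenever $\alpha\geq\alpha'$. Since $d(t)^2 = (2^\alpha/\Gamma(\alpha+1))\,t^{\alpha+1}w(t)$, this reduces to showing that $t^{\alpha+1}w(t)$ is non-decreasing, i.e.\ by differentiating under the integral that
\[
(\alpha+1)\int_0^\infty e^{-2rt}\,d\tilde\nu(r)\;\geq\;\int_0^\infty 2rt\,e^{-2rt}\,d\tilde\nu(r)
\]
for every $t>0$. I would prove this by dyadically decomposing $(0,\infty)=\bigsqcup_{n\in\mathbb{Z}}[2^n/t,\,2^{n+1}/t)$: on the $n$-th annulus the integrand $2rt\,e^{-2rt}$ is bounded pointwise by $2^{n+2}e^{-2^{n+1}}$, while iterating the $\Delta_2$-condition gives $\tilde\nu[0,2^{n+1}/t)\leq R^{n+1}\tilde\nu[0,1/t)$ for $n\geq 0$; since the super-exponential decay of $e^{-2^{n+1}}$ dominates the geometric factor $R^{n+1}$, the resulting series converges to some $K=K(R)<\infty$, yielding $\int 2rt\,e^{-2rt}\,d\tilde\nu\leq K\tilde\nu[0,1/t)$. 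Conversely $\int e^{-2rt}\,d\tilde\nu\geq e^{-1}\tilde\nu[0,1/(2t))\geq \tilde\nu[0,1/t)/(eR)$ (using $\Delta_2$ once more), so any $\alpha'\geq\max\{0,\,eRK-1\}$ does the job.
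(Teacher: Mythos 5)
Your proof is correct and follows the same two-step architecture as the paper's: reduce everything to the claim that $d=\sqrt{w/w_\alpha}$ is monotonically increasing once $\alpha$ is large enough, then conclude by conjugating with the isometry $D$ and invoking Lemma \ref{lemma:2} (your bookkeeping with $\tilde A = DAD^{-1}$ is, if anything, more careful than the paper's about which space each operator acts on and why causality transfers). The one place where you genuinely diverge is the monotonicity estimate. The paper proves $\int_0^\infty e^{-2rt}\bigl(r-\tfrac{\alpha+1}{2t}\bigr)\,d\tilde\nu(r)\le 0$ by splitting the integral at $\tfrac{\alpha+1}{2t}$ and $\tfrac{\alpha+2}{2t}$, estimating the three pieces via \eqref{eq:delta2} and a dyadic sum, and arriving at an implicit inequality valid for $\alpha$ sufficiently large; you instead decouple numerator and denominator, using a dyadic decomposition plus \eqref{eq:delta2} to get the $\alpha$-independent bound $\int_0^\infty 2rt\,e^{-2rt}\,d\tilde\nu\le K(R)\,\tilde\nu[0,1/t)$ against the lower bound $\int_0^\infty e^{-2rt}\,d\tilde\nu\ge \tilde\nu[0,1/t)/(eR)$, which buys you an explicit threshold $\alpha'=\max\{0,\,eRK-1\}$ depending only on $R$. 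That is cleaner than the paper's computation. The only loose end is that you state the measure bound $\tilde\nu[0,2^{n+1}/t)\le R^{n+1}\tilde\nu[0,1/t)$ only for $n\ge 0$; you should add the (trivial) remark that for $n<0$ each annulus has measure at most $\tilde\nu[0,1/t)$ and the pointwise bound $2rt\,e^{-2rt}\le 2^{n+2}$ makes that part of the series sum to at most $4\,\tilde\nu[0,1/t)$, so $K$ is indeed finite.
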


\begin{proof}
Let $L^2_{w}(0, \infty), \; L^2_{v_\alpha}(0, \infty)$ be the spaces corresponding to $A^2_\nu$ and $\mathcal{B}^2_\alpha(\mathbb{C}_+)$ respectively (that is $v_\alpha(t) = 2^{-\alpha}\Gamma(\alpha+1) t^{-\alpha-1}$). We want to show that \\
$\sqrt{w(t)/v_\alpha(t)}$ is an increasing function, that is, we must have
\begin{align*}
	w'(t)v_\alpha(t) &\geq w(t)v'_\alpha(t) \\
-2\pi \int_0^\infty 2re^{-2rt} \, d\tilde{\nu}(r) \cdot \frac{\Gamma(\alpha+1)}{2^\alpha t^{\alpha+1}} &\geq -2\pi \int_0^\infty e^{-2rt} \, d\tilde{\nu}(r) \cdot \frac{\Gamma(\alpha+2)}{2^\alpha t^{\alpha+2}} \\
	\int_0^\infty e^{-2rt}\left(r-\frac{\alpha+1}{2t}\right) \, d\tilde{\nu}(r) &\leq 0.
\end{align*}
Consider the graph:

\setlength{\unitlength}{5mm}
\begin{picture}(25,12)(-0.5,-6)
\put(-0.5,0){\vector(1,0){23}}
\put(0,-5){\vector(0,1){10}}
\put(22,-0.75){$r$}
\put(0.25,4.5){$g(r)$}
\put(15,0.75){$g(r)=e^{-2rt}\left(r-\frac{\alpha+1}{2t}\right)$}
\put(0.1,-4.5){$-\frac{\alpha+1}{2t}$}
\put(2.5,-0.75){$\frac{\alpha+1}{2t}$}
\put(4.5,-0.75){$\frac{\alpha+2}{2t}$}
\put(0.1,2.4){$\frac{e^{-(\alpha+2)}}{2t}$}
\multiput(0,2)(0.4,0){13}
{\line(1,0){0.2}}
\multiput(5,0)(0,0.4){5}
{\line(0,1){0.2}}
\linethickness{0.2mm}
\qbezier(0,-4)(3,-4)(4,0)
\qbezier(4,0)(4.5,2)(5,2)
\qbezier(5,2)(5,2)(6,1.5)
\qbezier(6,1.5)(9,0)(22,0.1)
\end{picture}

We clearly need to have
\begin{displaymath}
	-\int_0^{\frac{\alpha+1}{2t}} e^{-2rt}\left(r-\frac{\alpha+1}{2t}\right) \, d\tilde{\nu}(r) \geq \int_{\frac{\alpha+1}{2t}}^\infty e^{-2rt}\left(r-\frac{\alpha+1}{2t}\right) \, d\tilde{\nu}(r).
\end{displaymath}
Observe that for $\alpha \geq 0$ we have 
\begin{equation}
	\frac{\alpha+2}{4t} \leq \frac{\alpha+1}{2t}.
\label{eq:alphaest}
\end{equation}
Let $R$ be defined as in \eqref{eq:delta2}. Then we have
\begin{displaymath}
	-\int_0^{\frac{\alpha+1}{2t}} e^{-2rt}\left(r-\frac{\alpha+1}{2t}\right) \, d\tilde{\nu}(r) \geq \tilde{\nu}\left[0, \frac{\alpha+2}{4t}\right) \alpha \frac{e^{-\frac{\alpha+2}{2}}}{2t} \stackrel{\eqref{eq:delta2}}{\geq} \frac{\tilde{\nu}\left[0, \frac{\alpha+2}{2t}\right)}{2Rt} \alpha e^{-\frac{\alpha+2}{2}},
\end{displaymath}
\begin{align*}
	\int_{\frac{\alpha+1}{2t}}^{\frac{\alpha+2}{2t}} e^{-2rt}\left(r-\frac{\alpha+1}{2t}\right) \, d\tilde{\nu}(r) &\leq \tilde{\nu}\left[\frac{\alpha+1}{2t},\frac{\alpha+2}{2t}\right) \frac{e^{-(\alpha+2)}}{2t} \\
	&= \left(\tilde{\nu}\left[0,\frac{\alpha+2}{2t} \right) - \tilde{\nu}\left[0,\frac{\alpha+1}{2t} \right) \right) \frac{e^{-(\alpha+2)}}{2t} \\
	&\leq  \frac{\tilde{\nu}\left[0, \frac{\alpha+2}{2t}\right)}{2Rt} (R-1) e^{-(\alpha+2)},
	\end{align*}
because
\begin{displaymath}
	- \tilde{\nu}\left[0, \frac{\alpha+1}{2t}\right) \stackrel{\eqref{eq:alphaest}}{\leq} - \tilde{\nu}\left[0, \frac{\alpha+2}{4t}\right) \stackrel{\eqref{eq:delta2}}{\leq} -\frac{\tilde{\nu}\left[0, \frac{\alpha+2}{2t}\right)}{R},
\end{displaymath}
and
\begin{align*}
\int_{\frac{\alpha+2}{2t}}^\infty e^{-2rt}\left(r-\frac{\alpha+1}{2t}\right) \, d\tilde{\nu}(r) &\leq \sum_{n=0}^\infty \tilde{\nu}\left[2^n \frac{\alpha+2}{2t}, 2^{n+1}\frac{\alpha+2}{2t} \right) e^{-2^n(\alpha+2)}\frac{2^n(\alpha+2)-\alpha-1}{2t} \\
&\stackrel{\eqref{eq:delta2}}{\leq} \frac{\tilde{\nu}[0, \frac{\alpha+2}{2t})}{2t} (R-1) (\alpha+2) e^{-(\alpha+2)} \sum_{n=0}^\infty \left(2Re^{-(\alpha+2)}\right)^n \\
&= \frac{\tilde{\nu}[0, \frac{\alpha+2}{2t})}{2t} (R-1) e^{-(\alpha+2)}  \frac{\alpha+2}{1-2Re^{-(\alpha+2)}}.
\end{align*}
Collecting these inequalities we get
\begin{displaymath}
	e^{-\frac{\alpha+2}{2}} \frac{R-1}{\alpha} \left(1+\frac{R(\alpha+2)}{1-2Re^{-(\alpha+2)}}\right) \leq 1,
\end{displaymath}
which is true for all $\alpha \geq \alpha'$, for some $\alpha'$ sufficiently large. Now, let $A$ be an operator on $L^2_{v_\alpha}(0, \infty)$ induced by $W_{h,\varphi}$ acting on $\mathcal{B}^2_\alpha(\mathbb{C}_+)$ and let $D$ be the isometric operator from $L^2_w(0, \infty)$ to $L^2_{v_\alpha}(0, \infty)$ of multiplication by $\sqrt{w(t)/v_\alpha(t)}$. Consider the following commutative diagram:
\begin{displaymath}
	\xymatrix{L^2_w(0, \infty) \ar[r]^D \ar[d]|{D^{-1}AD} & L^2_{v_\alpha}(0, \infty) \ar[d]|A \\
						L^2_w(0, \infty) & L^2_{v_\alpha}(0, \infty) \ar[l]_{D^{-1}}}
\end{displaymath}
By Lemma \ref{lemma:2} we therefore have
\begin{displaymath}
	\left\|W_{h,\varphi}\right\|_{\mathcal{B}^2_\alpha(\mathbb{C}_+)} = \|A\|_{L^2_{v_\alpha}(0, \, \infty) } = \left\|D^{-1}AD\right\|_{L^2_w(0, \infty)} \stackrel{\eqref{eq:dad}}{\leq} \left\|A\right\|_{L^2_w(0, \infty)} = \left\|W_{h,\varphi}\right\|_{A^2_\nu},
\end{displaymath}
as required.
\end{proof}

In the remaining part of this section we will use the \emph{Nevanlinna representation} of a holomorphic function $\varphi :\mathbb{C}_+ \longrightarrow \mathbb{C}_+$:
\begin{equation}
	\varphi(z) = az+ib+ \int_\mathbb{R} \left(\frac{1}{it+z}+\frac{it}{1+t^2}\right) d\mu(t) = az+ib + \int_\mathbb{R} \frac{1+itz}{it+z} \frac{d\mu(t)}{1+t^2},
\label{eq:nevanlinna}
\end{equation}
where $a \geq 0, \, b \in \mathbb{R}$ and $\mu$ is a non-negative Borel measure measure on $\mathbb{R}$ satisfying the following growth condition:
\begin{displaymath}
	\int_\mathbb{R} \frac{d\mu(t)}{1+t^2} < \infty
\end{displaymath}
(see \cite{krein1977}). Clearly
\begin{displaymath}
	a = \lim_{\re(z) \rightarrow \infty} \frac{\varphi(\re(z))}{\re(z)}.
\end{displaymath}

\begin{theorem}[Theorem 3.1 in \cite{elliott2012} and Theorem 3.4 in \cite{elliott2011}]
The composition operator $C_\varphi$ is bounded on $\mathcal{B}^2_{\alpha}(\mathbb{C}_+), \, \alpha \geq -1$, if and only if $\varphi$ has finite angular derivative
$0 < \lambda < \infty$ at infinity, in which case $\|C_\varphi\| = \lambda^{(2+\alpha)/2}$.
\label{thm:elliott}
\end{theorem}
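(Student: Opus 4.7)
The if-and-only-if is in hand from the corollary immediately preceding this theorem: by Remark~\ref{remark:1} combined with Proposition~\ref{prop:elliott}, $C_\varphi$ is bounded on $\mathcal{B}^2_\alpha(\mathbb{C}_+)$ exactly when $\sup_{z\in\mathbb{C}_+}\bigl(\re(z)/\re(\varphi(z))\bigr)^{\alpha+2}$ is finite, which in turn is equivalent to $\varphi$ having a finite angular derivative $\lambda=\varphi'(\infty)$ at infinity. What remains is the sharp norm identity $\|C_\varphi\|=\lambda^{(\alpha+2)/2}$. The lower bound is immediate from the adjoint action on reproducing kernels: $C_\varphi^* k^{\mathcal{B}^2_\alpha}_z = k^{\mathcal{B}^2_\alpha}_{\varphi(z)}$ together with $\|k^{\mathcal{B}^2_\alpha}_z\|^2 = 2^\alpha(1+\alpha)(2\re(z))^{-(\alpha+2)}$ yields
$$\|C_\varphi\|^2 \;\geq\; \sup_{z\in\mathbb{C}_+}\left(\frac{\re(z)}{\re(\varphi(z))}\right)^{\alpha+2} \;=\; \lambda^{\alpha+2}$$
by Proposition~\ref{prop:elliott}.

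For the matching upper bound I would begin with the affine case $\varphi_0(z)=z/\lambda+i\beta$, $\beta\in\mathbb{R}$, working through the Laplace isometry $\mathfrak{L}:L^2_{v_\alpha}(0,\infty)\to\mathcal{B}^2_\alpha(\mathbb{C}_+)$ with $v_\alpha(t)=2^{-\alpha}\Gamma(\alpha+1)t^{-\alpha-1}$. A short computation shows that $\mathfrak{L}^{-1}C_{\varphi_0}\mathfrak{L}$ is the weighted dilation $(Af)(s)=\lambda e^{-i\beta\lambda s}f(\lambda s)$, and the scaling identity $v_\alpha(\lambda s)=\lambda^{-(\alpha+1)}v_\alpha(s)$ then produces $\|A\|^2=\lambda^{\alpha+2}$ after a single change of variables --- so the desired identity holds, with equality, in the affine case.

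The main obstacle is passing from $\varphi_0$ to a general $\varphi$ with the same angular derivative. My plan is to invoke the Nevanlinna representation \eqref{eq:nevanlinna} to split $\varphi(z)=z/\lambda+i\beta+\psi(z)$, where $\psi$ is a Herglotz function satisfying $\re\psi\geq 0$ and $\psi(z)/z\to 0$ non-tangentially. Since $\re\varphi(z)\geq\re\varphi_0(z)=\re(z)/\lambda$, the image $\varphi(\mathbb{C}_+)$ lies further inside $\mathbb{C}_+$ than $\varphi_0(\mathbb{C}_+)$, which should force the pushforward measure $\mu_{\nu,1,\varphi,2}$ to carry Carleson constant no larger than that of $\mu_{\nu,1,\varphi_0,2}$; combining this geometric observation with Theorem~\ref{thm:first} would then deliver $\|C_\varphi\|\leq\|C_{\varphi_0}\|=\lambda^{(\alpha+2)/2}$. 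A more analytic alternative is to approximate $\psi$ by finite convex sums of elementary Herglotz atoms $c/(it_0+z)$, use the kernel criterion of Remark~\ref{remark:1} to bound each approximant uniformly, and pass to the limit by a Montel / normal-families argument. The delicate point --- and what I expect to require the most care --- is not boundedness itself but extracting the sharp constant $\lambda^{\alpha+2}$ from the inequality $\re\psi\geq 0$, in order to match the lower bound exactly.
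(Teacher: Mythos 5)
First, a point of reference: the paper does not prove this statement at all --- it is imported verbatim from Theorem 3.1 of \cite{elliott2012} and Theorem 3.4 of \cite{elliott2011} --- so your attempt has to be judged on its own merits rather than against an in-paper argument. The parts you have actually carried out are correct: the equivalence of boundedness with the existence of a finite angular derivative follows, as you say, from Remark~\ref{remark:1} together with Proposition~\ref{prop:elliott}; the lower bound $\|C_\varphi\|^2\geq\sup_z(\re(z)/\re(\varphi(z)))^{\alpha+2}=\lambda^{\alpha+2}$ via the adjoint action on reproducing kernels is standard and sound; and your computation that $\mathfrak{L}^{-1}C_{\varphi_0}\mathfrak{L}$ is the weighted dilation $f(s)\mapsto\lambda e^{-i\beta\lambda s}f(\lambda s)$, hence exactly $\lambda^{(\alpha+2)/2}$ times an isometry of $L^2_{v_\alpha}(0,\infty)$, is right (with the usual caveat that at $\alpha=-1$ one takes $v_{-1}\equiv 1$ rather than the $\Gamma(\alpha+1)$ formula).

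The gap is the entire upper bound for non-affine $\varphi$, and neither of your two routes closes it. Route one rests on the claim that $\re\varphi\geq\re\varphi_0$ pointwise forces the Carleson constant of $\mu_{\nu,1,\varphi,2}$ to be at most that of $\mu_{\nu,1,\varphi_0,2}$; pushforward measures are not monotone in this way (domination of $\re\varphi$ says nothing about how $\varphi$ concentrates $\nu$ over individual Carleson squares), and more fundamentally the Carleson embedding theorem for Zen spaces (Theorem 2.1 of \cite{jacob2013}), like the kernel criterion of Remark~\ref{remark:1}, controls the operator norm only up to unspecified multiplicative constants --- so no argument routed through Theorem~\ref{thm:first} or \eqref{eq:boundedkernel} can ever produce the \emph{exact} value $\lambda^{(\alpha+2)/2}$. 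Route two simply defers the same difficulty to each Herglotz approximant. The missing idea, which is how \cite{elliott2012} and \cite{elliott2011} actually proceed, is to use the Nevanlinna representation \eqref{eq:nevanlinna} not merely to split off the affine part but to exploit positivity: since $\re(\varphi(z)-az-ib)\geq 0$, the function $z\mapsto e^{-t(\varphi(z)-az-ib)}$ is, for every $t>0$, analytic and bounded by $1$ in modulus on $\mathbb{C}_+$, and this is what lets one dominate $C_\varphi$ by $C_{\varphi_0}$ with constant exactly $1$ (equivalently, one shows the kernel $\lambda^{\alpha+2}k_w(z)-k_{\varphi(w)}(\varphi(z))$ is positive semidefinite, reducing to the contractivity of $C_\sigma$ for any self-map $\sigma$ with $\sigma'(\infty)\leq 1$). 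Until some such contractivity statement is proved, your argument establishes only $\|C_\varphi\|\geq\lambda^{(\alpha+2)/2}$ together with boundedness, not the norm identity.
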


Let $a>0$. We define a holomorphic map $\psi_a : \mathbb{C}_+ \longrightarrow \mathbb{C}_+$ by $\psi_a(z) = az$, for all $z \in \mathbb{C}_+$.

\begin{prop}[Corollary 3.4 in \cite{chalendar2014}]
\label{prop:chalendar1}
Let $a>0$. If $W_{h,\varphi}$ is bounded on $H^2(\mathbb{C}_+)$, then it is also bounded on $A^2_\nu$ and
\begin{equation}
\|W_{h,\varphi}\|_{A^2_\nu} \leq \|C_{\psi_a}\|_{A^2_\nu} \|C_{\psi_{1/a}}\|_{H^2(\mathbb{C}_+)} \|W_{h, \varphi}\|_{H^2(\mathbb{C}_+)}.
\label{eq:chalendar1}
\end{equation}
\end{prop}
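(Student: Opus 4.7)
I would reduce the $A^2_\nu$-bound to the hypothesised $H^2(\mathbb{C}_+)$-bound via a similarity factorisation of $W_{h,\varphi}$ through the dilations $\psi_a$ and $\psi_{1/a}$, which are mutually inverse automorphisms of $\mathbb{C}_+$. A short direct computation, using $\psi_a\circ\psi_{1/a}=\id_{\mathbb{C}_+}$, gives the operator identity
\begin{displaymath}
W_{h,\varphi}\;=\;C_{\psi_a}\,W'\,C_{\psi_{1/a}},\qquad W':=W_{h\circ\psi_{1/a},\,\psi_a\circ\varphi\circ\psi_{1/a}},
\end{displaymath}
on the algebra of holomorphic functions on $\mathbb{C}_+$, from which one also extracts the commutation $W'\,C_{\psi_{1/a}}=C_{\psi_{1/a}}\,W_{h,\varphi}$. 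On the Hardy space, the outer factors are mutually inverse bounded operators whose norms satisfy $\|C_{\psi_a}\|_{H^2(\mathbb{C}_+)}\cdot\|C_{\psi_{1/a}}\|_{H^2(\mathbb{C}_+)}=a^{-1/2}\cdot a^{1/2}=1$ by Theorem \ref{thm:elliott} applied to the linear maps $\psi_a$ and $\psi_{1/a}$, so the similarity preserves $H^2$-operator norm and $\|W'\|_{H^2(\mathbb{C}_+)}=\|W_{h,\varphi}\|_{H^2(\mathbb{C}_+)}$.

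For $f\in A^2_\nu$, applying the factorisation and then the operator norm of $C_{\psi_a}$ on $A^2_\nu$ gives
\begin{displaymath}
\|W_{h,\varphi}f\|_{A^2_\nu}\;\leq\;\|C_{\psi_a}\|_{A^2_\nu}\,\|C_{\psi_{1/a}}W_{h,\varphi}f\|_{A^2_\nu},
\end{displaymath}
so the entire problem reduces to proving the estimate $\|C_{\psi_{1/a}}W_{h,\varphi}f\|_{A^2_\nu}\leq\|C_{\psi_{1/a}}\|_{H^2(\mathbb{C}_+)}\,\|W_{h,\varphi}\|_{H^2(\mathbb{C}_+)}\,\|f\|_{A^2_\nu}$. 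For this I would exploit the slice representation
\begin{displaymath}
\|g\|_{A^2_\nu}^2\;=\;\int_0^\infty\int_{-\infty}^\infty|g(r+iy)|^2\,dy\,d\tilde{\nu}(r),
\end{displaymath}
obtained from monotone convergence in the supremum defining the Zen norm, using that each inner integral is nonincreasing in $\re(z)$ by subharmonicity of $|g|^2$. Applied to $g=C_{\psi_{1/a}}W_{h,\varphi}f$, and after the change of variable $y\mapsto ay$ and $r\mapsto ar$ produced by the dilation, the slice integrals rewrite as $H^2$-norms of translates of $W_{h,\varphi}f$; these are then estimated by combining the intertwining $S_sW_{h,\varphi}=W_{h(\cdot+s),\varphi(\cdot+s)}$ with the contractivity of the translation semigroup $S_s$ on $H^2(\mathbb{C}_+)$ and the hypothesised Hardy-space bound on $W_{h,\varphi}$, and reassembling against $d\tilde{\nu}$ returns $\|f\|_{A^2_\nu}$ together with the required dilation and operator-norm factors.

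The main obstacle is the slicewise estimate: the translate $(W_{h,\varphi}f)(\cdot+s)$ is not $W_{h,\varphi}$ applied to a translate of $f$, but rather the symbol-translated operator $W_{h(\cdot+s),\varphi(\cdot+s)}$ applied to $f$, so the naive Hardy-space bound is of the wrong form. Resolving this requires using the translation semigroup both to bound the translated-symbol operator in $H^2$-operator norm by $W_{h,\varphi}$ itself (via $\|S_s\|_{H^2(\mathbb{C}_+)}\leq 1$) and to move between slice integrals of $f$ and genuine $H^2$-norms of its translates (which are finite for $s>0$ by subharmonicity and the $\Delta_2$-condition, the latter guaranteeing $\tilde{\nu}[0,r)>0$ for every $r>0$). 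Once this bookkeeping is carried out carefully, the product of the three norms in the statement emerges from the factorisation in the first step.
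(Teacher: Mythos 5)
The paper itself gives no proof of this proposition: it is quoted verbatim from \cite{chalendar2014} (Corollary 3.4), where it is deduced from the causality-transference principle that the present paper reproduces as Lemma \ref{lemma:2} and Theorem \ref{thm:awfultheorem}. Measured against that source, your factorisation $W_{h,\varphi}=C_{\psi_a}W'C_{\psi_{1/a}}$, the reduction to the single estimate $\|C_{\psi_{1/a}}W_{h,\varphi}f\|_{A^2_\nu}\leq\|C_{\psi_{1/a}}\|_{H^2(\mathbb{C}_+)}\|W_{h,\varphi}\|_{H^2(\mathbb{C}_+)}\|f\|_{A^2_\nu}$, and the slice representation of the Zen norm are all the right skeleton. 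But the step you yourself flag as ``the main obstacle'' is the entire content of the proposition, and the tool you propose for it does not close it.

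Concretely: writing the slice of $W_{h,\varphi}f$ at height $r$ as $2\pi\|S_rW_{h,\varphi}f\|^2_{H^2(\mathbb{C}_+)}$ and invoking $S_rW_{h,\varphi}=W_{h(\cdot+r),\varphi(\cdot+r)}$ together with $\|S_r\|_{H^2(\mathbb{C}_+)}\leq 1$ yields only $\|S_rW_{h,\varphi}f\|_{H^2}\leq\|W_{h,\varphi}\|_{H^2}\|f\|_{H^2}$, i.e.\ the \emph{boundary} slice of $f$ on the right-hand side rather than its slice at height comparable to $r$. Integrating this against $d\tilde{\nu}(r)$ gives $\tilde{\nu}[0,\infty)\,\|f\|^2_{H^2}$, which is infinite for every genuine Bergman-type $\tilde\nu$ and is in any case not $\|f\|^2_{A^2_\nu}$; worse, $f\in A^2_\nu$ need not belong to $H^2(\mathbb{C}_+)$ at all. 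To land on the slice of $f$ at height $ar$ you must instead pull a translation out on the \emph{right}, writing $S_rW_{h,\varphi}=B_r S_{ar}$ with $B_r f=h(\cdot+r)\,f\bigl(\varphi(\cdot+r)-ar\bigr)$ (which already requires $\re\varphi(z)\geq a\re z$, i.e.\ $a$ no larger than the Nevanlinna coefficient of $\varphi$ --- the proposition is not really about an arbitrary $a>0$), and then you need $\sup_{r>0}\|B_r\|_{H^2(\mathbb{C}_+)}<\infty$ with the right constant. Under the Laplace transform $B_r$ becomes $M_{e^{-rt}}\,\widehat{W_{h,\varphi}}\,M_{e^{art}}$, and the required uniform bound is exactly the statement that conjugating a causal operator by multiplication by an increasing function does not increase its norm --- Lemma \ref{lemma:2} with $w\equiv 1$ and $d(t)=e^{art}$ --- whose proof rests on the operator-valued maximum-principle argument, not on contractivity of the translation semigroup. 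So the outline is sound, but the decisive inequality is asserted rather than proved; to complete the argument you should establish that $C_{\psi_{1/a}}W_{h,\varphi}$ is causal and then apply Lemma \ref{lemma:2} (as \cite{chalendar2014} does), rather than attempt to derive the transference from $\|S_s\|\leq 1$.
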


\begin{prop}[Proposition 3.5 in \cite{chalendar2014}]
\label{prop:chalendar2}
Let $a > 0$. Then
\begin{equation}
\|C_{\psi_a}\|_{A^2_\nu} = \sqrt{\sup_{t>0} \frac{w(at)}{aw(t)}}.
\label{eq:chalendar2}
\end{equation}
\end{prop}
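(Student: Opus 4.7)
The plan is to transfer the problem from $A^2_\nu$ to $L^2_w(0, \infty)$ via the isometric Laplace transform, where $C_{\psi_a}$ becomes a weighted dilation that is diagonal enough that its norm can be read off directly.

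First I would use that $\mathfrak{L} : L^2_w(0, \infty) \to A^2_\nu$ is an isometric isomorphism, so
\begin{displaymath}
\|C_{\psi_a}\|_{A^2_\nu} = \|\mathfrak{L}^{-1} C_{\psi_a} \mathfrak{L}\|_{L^2_w(0, \infty)}.
\end{displaymath}
A direct change of variable in the Laplace integral shows that if $F = \mathfrak{L} f$, then $F(az) = \int_0^\infty f(t) e^{-taz}\, dt = \int_0^\infty \frac{1}{a} f(s/a) e^{-sz}\, ds$, so the operator induced on $L^2_w(0, \infty)$ is the dilation
\begin{displaymath}
(T_a f)(s) := \frac{1}{a} f(s/a), \qquad s > 0.
\end{displaymath}

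Next I would compute the norm of $T_a$. Another change of variable $u = s/a$ gives
\begin{displaymath}
\|T_a f\|^2_{L^2_w(0, \infty)} = \int_0^\infty \frac{1}{a^2} |f(s/a)|^2 w(s)\, ds = \int_0^\infty |f(u)|^2 \frac{w(au)}{a}\, du.
\end{displaymath}
Thus
\begin{displaymath}
\|T_a\|^2 = \sup_{f \neq 0} \frac{\int_0^\infty |f(u)|^2 \, w(au)/a \, du}{\int_0^\infty |f(u)|^2 \, w(u)\, du} = \operatorname*{ess\,sup}_{u > 0} \frac{w(au)}{a w(u)},
\end{displaymath}
which is the standard identification of the norm of a multiplier between two weighted $L^2$ spaces on the same underlying measure space (the upper bound is trivial from the pointwise estimate, the lower bound follows by testing on characteristic functions of small intervals where the ratio is near its essential supremum). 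Since $w$ is continuous on $(0, \infty)$ (being the Laplace transform of a finite measure on compacts, up to a constant), this essential supremum is the honest supremum.

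Combining the two displays gives $\|C_{\psi_a}\|_{A^2_\nu}^2 = \sup_{t > 0} w(at)/(a w(t))$, as required. There is no serious obstacle here; the only point that deserves care is justifying the interchange of essential supremum and supremum for $w$, and verifying the change of variable inside the Laplace integral — both follow from the definition of $w$ in \eqref{eq:w} and Fubini/standard measure-theoretic substitution.
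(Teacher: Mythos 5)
Your argument is correct. Note that the paper itself gives no proof of this statement --- it is quoted verbatim as Proposition 3.5 of \cite{chalendar2014} --- and your route (conjugating $C_{\psi_a}$ by the isometric Laplace transform to obtain the dilation $(T_af)(s)=a^{-1}f(s/a)$, then reading off the multiplier norm between the weighted $L^2$ spaces as an essential supremum, which equals the supremum by continuity of $w$) is precisely the standard argument from that reference. The only hypothesis worth making explicit is the surjectivity of $\mathfrak{L}:L^2_w(0,\infty)\to A^2_\nu$, which is needed to pass from $\|T_a\|_{L^2_w(0,\infty)}$ back to $\|C_{\psi_a}\|_{A^2_\nu}$; the paper assumes this throughout the section in which the proposition is stated, so your proof is complete in that context.
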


\begin{corollary}
Let $a>0$. If $W_{h, \varphi}$ is bounded on some Zen space $A^2_\nu$, then there exists $\alpha'>0$ such that for all $\alpha \geq \alpha'$, we have
\begin{displaymath}
\left\|W_{h,\varphi}\right\|_{\mathcal{B}^2_\alpha(\mathbb{C}_+)} \leq a^{-\frac{\alpha+1}{2}} \left(\sup_{t>0} \frac{w(t/a)}{w(t)}\right)^{1/2} \left\|W_{h,\varphi}\right\|_{A^2_\nu} < \infty.
\end{displaymath}
\label{cor:nicecorollary}
\end{corollary}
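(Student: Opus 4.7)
The plan is to factor $W_{h,\varphi}$ through a scaling operator $C_{\psi_a}$, apply Theorem \ref{thm:awfultheorem} to the remaining middle factor in order to move from $A^2_\nu$ down to $\mathcal{B}^2_\alpha(\mathbb{C}_+)$, and then insert the explicit norm formulas coming from Theorem \ref{thm:elliott} and Proposition \ref{prop:chalendar2}. Since $\psi_a \circ \psi_{1/a} = \mathrm{id}$, one has the operator identity
\begin{displaymath}
	W_{h,\varphi} \;=\; V \cdot C_{\psi_a}, \qquad V \,:=\, W_{h,\varphi} \cdot C_{\psi_{1/a}},
\end{displaymath}
and a direct computation identifies $V$ with the weighted composition operator $W_{h, \varphi/a}$. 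Submultiplicativity of the operator norm on $\mathcal{B}^2_\alpha(\mathbb{C}_+)$ reduces the goal to estimating $\|V\|_{\mathcal{B}^2_\alpha(\mathbb{C}_+)}$.

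Next, I would apply Theorem \ref{thm:awfultheorem} to $V$, viewed as a bounded operator on $A^2_\nu$, to obtain $\|V\|_{\mathcal{B}^2_\alpha(\mathbb{C}_+)} \leq \|V\|_{A^2_\nu}$ for every $\alpha \geq \alpha'$. A further use of submultiplicativity gives $\|V\|_{A^2_\nu} \leq \|W_{h,\varphi}\|_{A^2_\nu} \cdot \|C_{\psi_{1/a}}\|_{A^2_\nu}$. It remains only to substitute the two scaling norms: Theorem \ref{thm:elliott} gives $\|C_{\psi_a}\|_{\mathcal{B}^2_\alpha(\mathbb{C}_+)} = a^{-(\alpha+2)/2}$ (since $\psi_a'(\infty) = 1/a$), while Proposition \ref{prop:chalendar2} with $a$ replaced by $1/a$ yields $\|C_{\psi_{1/a}}\|_{A^2_\nu} = \sqrt{a \, \sup_{t>0} w(t/a)/w(t)}$. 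Collecting the powers of $a$ as $-(\alpha+2)/2 + 1/2 = -(\alpha+1)/2$ produces exactly the constant appearing in the corollary.

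The main obstacle is verifying the causality hypothesis of Theorem \ref{thm:awfultheorem} for the operator $V$. Via the Laplace isomorphism $\mathfrak{L} : L^2_w(0,\infty) \to A^2_\nu$, the operator $C_{\psi_{1/a}}$ corresponds to the dilation $g \mapsto a \, g(a \, \cdot\,)$ on $L^2_w(0,\infty)$, which sends $L^2_w(T, \infty)$ into $L^2_w(T/a, \infty)$ and is therefore causal precisely when $a \leq 1$. Consequently, under the standing causality assumption on $W_{h,\varphi}$ (which is the operative setting of this section), the composition $V = W_{h,\varphi} \cdot C_{\psi_{1/a}}$ is causal for $a \leq 1$ and the plan goes through cleanly; the remaining range $a > 1$ can be accommodated by symmetrically swapping $\psi_a$ and $\psi_{1/a}$ in the initial factorization and repeating the argument.
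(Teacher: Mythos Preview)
Your overall plan coincides with the paper's: factor $W_{h,\varphi}$ through $C_{\psi_a}$ and $C_{\psi_{1/a}}$, push the middle factor from $A^2_\nu$ down to $\mathcal{B}^2_\alpha(\mathbb{C}_+)$ via Theorem~\ref{thm:awfultheorem}, and then substitute the scaling-operator norms from Theorem~\ref{thm:elliott} and Proposition~\ref{prop:chalendar2}. The paper composes in the opposite order, taking $C_{\psi_{1/a}} W_{h,\varphi}$ as the middle factor rather than your $V = W_{h,\varphi}\, C_{\psi_{1/a}}$; algebraically either choice leads to the same string
\[
\left\|W_{h,\varphi}\right\|_{\mathcal{B}^2_\alpha(\mathbb{C}_+)} \;\leq\; \left\|C_{\psi_a}\right\|_{\mathcal{B}^2_\alpha(\mathbb{C}_+)} \left\|C_{\psi_{1/a}}\right\|_{A^2_\nu} \left\|W_{h,\varphi}\right\|_{A^2_\nu}
\]
and hence to the stated constant.

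Where you diverge from the paper is in the treatment of causality. The paper's proof simply asserts that $C_{\psi_{1/a}} W_{h,\varphi}$ is causal for every $a>0$ and moves on; you are right to scrutinise this hypothesis, but your proposed resolution has two defects. First, the corollary as stated does \emph{not} assume $W_{h,\varphi}$ is causal---only bounded on $A^2_\nu$---so invoking a ``standing causality assumption'' imports a hypothesis absent from the statement. Second, your swap for $a>1$ does not recover the claimed bound: interchanging $\psi_a$ and $\psi_{1/a}$ in the factorization produces $\|C_{\psi_{1/a}}\|_{\mathcal{B}^2_\alpha(\mathbb{C}_+)}\,\|C_{\psi_a}\|_{A^2_\nu}$ in place of $\|C_{\psi_a}\|_{\mathcal{B}^2_\alpha(\mathbb{C}_+)}\,\|C_{\psi_{1/a}}\|_{A^2_\nu}$, which is the inequality with $a$ replaced by $1/a$---a different estimate. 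Thus your argument, as written, establishes the corollary only for $a\le 1$ and under an additional causality hypothesis on $W_{h,\varphi}$; it does not cover the full statement as the paper does (by fiat).
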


\begin{proof}
The operator $C_{\psi_\frac{1}{a}} W_{h,\varphi}$ is causal for each $a>0$, so by Theorem \ref{thm:awfultheorem} we have
\begin{displaymath}
\left\|W_{h,\varphi}\right\|_{\mathcal{B}^2_\alpha(\mathbb{C}_+)} \stackrel{\eqref{eq:weightedbergmanzen}}{\leq} \left\|C_{\psi_a}\right\|_{\mathcal{B}^2_\alpha(\mathbb{C}_+)} \left\|C_{\psi_\frac{1}{a}}\right\|_{A^2_\nu} \left\|W_{h,\varphi}\right\|_{A^2_\nu},
\end{displaymath}
and the result follows from Theorem \ref{thm:elliott} and Proposition \ref{prop:chalendar2}.
\end{proof}

\begin{theorem}
The composition operator $C_\varphi$ is bounded on $A^2_\nu$ if and only if $\varphi$ has a finite angular derivative $0 < \lambda < \infty$ at infinity. If $C_\varphi$ is bounded, then
\begin{displaymath}
	\lambda \inf_{t>0} \frac{w(t)}{w(\lambda t)} \leq \left\|C_\varphi\right\|^2_{A^2_\nu} \leq \lambda \sup_{t>0}\frac{w(t/\lambda)}{w(t)},
\end{displaymath}
\label{thm:Zenboundedness}
\end{theorem}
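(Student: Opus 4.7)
The plan is to stitch together the tools already developed in this section; one well-chosen parameter, $a = 1/\lambda$, will drive both bounds.

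For the equivalence I would handle the two directions separately. Given a finite angular derivative $\lambda > 0$, Theorem \ref{thm:elliott} applied with $\alpha = -1$ gives that $C_\varphi$ is bounded on $H^2(\mathbb{C}_+)$, and Proposition \ref{prop:chalendar1}, applied with $h \equiv 1$, transfers this to $A^2_\nu$. Conversely, if $C_\varphi$ is bounded on $A^2_\nu$, Corollary \ref{cor:nicecorollary} yields boundedness on $\mathcal{B}^2_\alpha(\mathbb{C}_+)$ for all $\alpha \geq \alpha'$, and Theorem \ref{thm:elliott} then produces the angular derivative $\lambda$.

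The upper bound I would extract by substituting $h \equiv 1$ and $a = 1/\lambda$ into Proposition \ref{prop:chalendar1}, and then evaluating the three factors on the right-hand side: Proposition \ref{prop:chalendar2} computes $\|C_{\psi_{1/\lambda}}\|^2_{A^2_\nu} = \lambda \sup_{t > 0} w(t/\lambda)/w(t)$, while Theorem \ref{thm:elliott} (applied to $\psi_\lambda$ and $\varphi$, whose angular derivatives at infinity are $1/\lambda$ and $\lambda$ respectively) gives $\|C_{\psi_\lambda}\|^2_{H^2(\mathbb{C}_+)} = 1/\lambda$ and $\|C_\varphi\|^2_{H^2(\mathbb{C}_+)} = \lambda$. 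Squaring the inequality from Proposition \ref{prop:chalendar1}, the latter two factors cancel, leaving exactly $\lambda \sup_t w(t/\lambda)/w(t)$.

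The lower bound I would extract dually by substituting $h \equiv 1$ and the same $a = 1/\lambda$ into Corollary \ref{cor:nicecorollary}, for any $\alpha \geq \alpha'$. Using $\|C_\varphi\|^2_{\mathcal{B}^2_\alpha(\mathbb{C}_+)} = \lambda^{\alpha+2}$ from Theorem \ref{thm:elliott} and rearranging yields $\|C_\varphi\|^2_{A^2_\nu} \geq \lambda^{\alpha+2} a^{\alpha+1} / \sup_t w(t/a)/w(t)$; with $a = 1/\lambda$ the numerator collapses to $\lambda$ independently of $\alpha$, and the denominator becomes $\sup_t w(\lambda t)/w(t)$, so the bound becomes $\lambda \inf_t w(t)/w(\lambda t)$. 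No step here presents a real obstacle — every ingredient is already in place — and the only observation worth flagging is that the cancellation of the $\alpha$-powers in the lower-bound computation is precisely what singles out $a = 1/\lambda$ as the canonical parameter, the same choice that makes the upper-bound factorisation tight, so the two bounds are genuinely reflections of one another through this value of $a$.
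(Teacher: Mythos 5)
Your proposal is correct, and for the forward implication ("finite angular derivative $\Rightarrow$ bounded"), the upper bound, and the lower bound it is exactly the paper's argument: substitute $h\equiv 1$ and $a=1/\lambda$ into Propositions \ref{prop:chalendar1} and \ref{prop:chalendar2} and into Corollary \ref{cor:nicecorollary}, then evaluate the Bergman/Hardy norms via Theorem \ref{thm:elliott}; your observation that the exponent $\alpha$ cancels precisely at $a=1/\lambda$ is the same computation the paper performs (the paper phrases it via the Nevanlinna coefficient $a=\lambda^{-1}$, but the parameter is the same). The one place you genuinely diverge is the reverse implication ("bounded $\Rightarrow$ finite angular derivative"): you deduce it by passing through Corollary \ref{cor:nicecorollary} to get boundedness on $\mathcal{B}^2_\alpha(\mathbb{C}_+)$ for large $\alpha$ and then invoking Theorem \ref{thm:elliott}, whereas the paper argues directly by contradiction with reproducing kernels: if $\varphi'(\infty)$ fails to exist, Proposition \ref{prop:elliott} supplies points $z_n$ with $\re(z_n)/\re(\varphi(z_n))>n$, and the kernel formula \eqref{eq:kernel} together with the monotonicity of $w$ forces $\|C_\varphi^*\|^2\geq \|k_{\varphi(z_n)}\|^2/\|k_{z_n}\|^2\geq n$. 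Both routes are valid. Yours is more economical, since Corollary \ref{cor:nicecorollary} is needed for the lower bound anyway; the paper's is more self-contained and quantitative, as it avoids routing this implication through the causality machinery (Theorem \ref{thm:awfultheorem} and the surjectivity assumption on $\mathfrak{L}$ behind it) and exhibits explicitly how the norm blows up when the angular derivative fails to exist.
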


\begin{proof}
Suppose, for contradiction, that $C_\varphi$ is bounded on $A^2_\nu$, but $\varphi$ does not have a finite angular derivative at infinity. By Proposition \ref{prop:elliott} we know that for each $n \geq 1 $ there must exist $z_n \in \mathbb{C}_+$ such that
\begin{equation}
	\frac{\re(z_n)}{\re(\phi(z_n))} > n.
	\label{eq:z_n}
\end{equation}
Now,
\begin{equation}
	\|C^*_\varphi\|^2 \geq \frac{\left\|k^{A^2_\nu}_{\varphi(z_n)}\right\|^2}{\left\|k^{A^2_\nu}_{z_n}\right\|^2} \stackrel{\eqref{eq:kernel}}{=} \frac{\int_0^\infty \frac{e^{-2t\re(\varphi(z_n))}}{w(t)} \, dt}{\int_0^\infty \frac{e^{-2t\re(z_n)}}{w(t)} \, dt} \stackrel{\eqref{eq:z_n}}{\geq} \frac{\int_0^\infty \frac{e^{-2t\re(z_n)/n}}{w(t)} \, dt}{\int_0^\infty \frac{e^{-2t\re(z_n)}}{w(t)} \, dt}.
	\label{eq:Cphin}
\end{equation}
Since $w$, by definition, is non-increasing, we have that $w(nt) \leq w(t)$, for all $n \geq 1$, and consequently
\begin{displaymath}
	\|C^*_\varphi\|^2 \stackrel{\eqref{eq:Cphin}} \geq  \frac{\int_0^\infty \frac{e^{-2t\re(z_n)}}{w(nt)} n\, dt}{\int_0^\infty \frac{e^{-2t\re(z_n)}}{w(t)} \, dt} \geq n \frac{\int_0^\infty \frac{e^{-2t\re(z_n)}}{w(t)} \, dt}{\int_0^\infty \frac{e^{-2t\re(z_n)}}{w(t)} \, dt} = n,
\end{displaymath}
for all $n \geq 1$, which is absurd, as it contradicts the boundedness of $C_\varphi$. So, if $C_\varphi$ is bounded, then $\varphi$ has a finite angular derivative $0<\lambda<\infty$ at infinity and
\begin{displaymath}
	\lambda \defeq \lim_{\stackrel{z \longrightarrow \infty}{\text{nontangentially}}} \frac{z}{\varphi(z)} = \lim_{\re(z) \rightarrow \infty} \frac{\re(z)}{\varphi(\re(z))} = a^{-1},
\end{displaymath}
where $0<a<\infty$ is defined as in \eqref{eq:nevanlinna}. Conversely, if $\varphi$ has a finite angular derivative $\lambda$ at infinity, then, by Theorem \ref{thm:elliott}, $C_\varphi$ is bounded on the Hardy space $H^2(\mathbb{C}_+)$, and, by Proposition \ref{prop:chalendar1}, we get that it is also bounded on $A^2_\nu$ with
\begin{displaymath}
	\|C_\varphi\|_{A^2_\nu} \leq \|C_{\psi_a}\|_{A^2_\nu} \|C_{\psi_{1/a}}\|_{H^2(\mathbb{C}_+)} \|C_\varphi\|_{H^2(\mathbb{C}_+)}.
\end{displaymath}
We can evaluate the RHS of this inequality using Theorem \ref{thm:elliott} and Proposition \ref{prop:chalendar2} to get
\begin{displaymath}
	\|C_\varphi\|_{A^2_\nu}^2 \leq \sup_{t>0} \frac{w(at)}{aw(t)} \cdot a  \cdot \lambda = \lambda \sup_{t>0} \frac{w(t/\lambda)}{w(t)}.
\end{displaymath}
By Corollary \ref{cor:nicecorollary} we also know that if $C_\varphi$ is bounded on $A^2_\nu$, then there exists $\alpha > 0$ such that
\begin{displaymath}
	\|C_\varphi\|_{A^2_\nu} \geq \|C_\varphi\|_{\mathcal{B}^2_\alpha(\mathbb{C}_+)} a^{(\alpha+1)/2} \left(\sup_{t>0}\frac{w(t/a)}{w(t)}\right)^{-1/2}.
\end{displaymath}
Again, we can evaluate the RHS of this inequality using Theorem \ref{thm:elliott} and Proposition \ref{prop:chalendar2} to get
\begin{displaymath}
	\|C_\varphi\|_{A^2_\nu}^2 \geq \lambda^{\alpha+2} a^{\alpha+1} \inf_{t>0} \frac{w(t)}{w(\lambda t)} = \lambda \inf_{t>0}\frac{w(t)}{w(\lambda t)}.
\end{displaymath}
\end{proof}

\section{Compactness}
\label{sec:compactness}

In \cite{elliott2011} S. J. Elliott and A. Wynn have proved that there exists no compact composition operator on any weighted Bergman space (Theorem 3.6). We shall extend this result to all Zen spaces. But first we need to prove two technical lemmata.

\begin{lemma}
Let $w$ be as given in \eqref{eq:w}. There exists $c\geq 2$ such that
\begin{equation}
	w\left(\frac{t}{2}\right) \leq cw(t) \qquad \qquad (\forall t>0).
\label{eq:wmestimate}
\end{equation}
\end{lemma}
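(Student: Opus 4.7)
The plan is to bound $w(t/2)$ from above and $w(t)$ from below, in each case in terms of $\tilde{\nu}[0, 1/t)$, using the $\Delta_2$ condition.

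First I would write
\begin{displaymath}
w(t/2) = 2\pi \int_{[0, 1/t)} e^{-rt} \, d\tilde{\nu}(r) + 2\pi \sum_{n=0}^\infty \int_{[2^n/t, \, 2^{n+1}/t)} e^{-rt} \, d\tilde{\nu}(r).
\end{displaymath}
The first integral is bounded by $2\pi \tilde{\nu}[0, 1/t)$. For the $n$-th term in the tail, bound $e^{-rt} \leq e^{-2^n}$ on the interval of integration, and then by iterating the $\Delta_2$ condition,
\begin{displaymath}
\tilde{\nu}[2^n/t, \, 2^{n+1}/t) \leq \tilde{\nu}[0, \, 2^{n+1}/t) \leq R^{n+1} \tilde{\nu}[0, 1/t).
\end{displaymath}
This yields
\begin{displaymath}
w(t/2) \leq 2\pi \tilde{\nu}[0, 1/t)\left(1 + R \sum_{n=0}^\infty R^n e^{-2^n}\right),
\end{displaymath}
where the series converges because $e^{-2^n}$ decays faster than any geometric sequence.

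Next, for the lower bound on $w(t)$, I would restrict the integral to the smaller interval $[0, 1/(2t))$, on which $e^{-2rt} \geq e^{-1}$, giving
\begin{displaymath}
w(t) \geq 2\pi e^{-1} \tilde{\nu}[0, 1/(2t)).
\end{displaymath}
One more application of $\Delta_2$, namely $\tilde{\nu}[0, 1/t) \leq R \, \tilde{\nu}[0, 1/(2t))$, then gives $w(t) \geq (2\pi e^{-1}/R) \tilde{\nu}[0, 1/t)$.

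Dividing the two estimates, $\tilde{\nu}[0, 1/t)$ cancels (it is strictly positive for every $t > 0$, as otherwise iterating $\Delta_2$ would force $\tilde{\nu} \equiv 0$), and we obtain
\begin{displaymath}
\frac{w(t/2)}{w(t)} \leq eR\left(1 + R\sum_{n=0}^\infty R^n e^{-2^n}\right),
\end{displaymath}
uniformly in $t > 0$. Taking $c$ to be the maximum of $2$ and this constant (in fact $eR \geq e > 2$ since $R \geq 1$, so the second quantity already dominates) completes the proof. The only delicate point is convergence of the dyadic series, but since $e^{-2^n}$ beats $R^n$ for every fixed $R$, this presents no real obstacle.
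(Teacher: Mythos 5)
Your argument is correct, and it is organized differently from the paper's. The paper rewrites the desired inequality as $\int_0^\infty e^{-rt}(1-ce^{-rt})\,d\tilde{\nu}(r)\le 0$, splits the integral at the sign change $r=\ln c/t$, and balances the negative mass against the positive mass (the latter estimated by a dyadic decomposition together with the \eqref{eq:delta2}-condition), arriving at an inequality in $c$ that holds once $c$ is sufficiently large. You instead compare both $w(t/2)$ and $w(t)$ to the single quantity $\tilde{\nu}[0,1/t)$: the upper bound on $w(t/2)$ via the same kind of dyadic decomposition and iteration of \eqref{eq:delta2}, and the lower bound on $w(t)$ by restricting the integral to $[0,1/(2t))$ and applying \eqref{eq:delta2} once more. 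The underlying tools are identical --- the $\Delta_2$-condition applied along dyadic scales, with the double-exponential decay of $e^{-2^n}$ absorbing the powers of $R$ --- but your bookkeeping is simpler and yields an explicit admissible constant $c=eR\left(1+R\sum_{n\ge 0}R^n e^{-2^n}\right)$, whereas the paper only concludes that some sufficiently large $c$ works. Your parenthetical justification that $\tilde{\nu}[0,1/t)>0$ for all $t$ (since otherwise iterating \eqref{eq:delta2} forces $\tilde{\nu}\equiv 0$, a trivial case) is exactly the point needed to divide the two estimates, and is handled correctly.
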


\begin{proof}
This result follows from the \eqref{eq:delta2}-condition, and the proof uses essentially the same strategy as the proof of Theorem \ref{thm:awfultheorem}. We want to show that there exists $c\geq 2$ such that
\begin{displaymath}
	\int_0^\infty e^{-rt} \, d\tilde{\nu}(r) \leq c \int_0^\infty e^{-2rt} \, d\tilde{\nu}(r) \qquad \qquad (\forall t>0)
\end{displaymath}
or equivalently
\begin{displaymath}
	\int_0^\infty e^{-rt}(1-c e^{-rt}) \, d\tilde{\nu}(r) \leq 0 \qquad \qquad (\forall t>0).
\end{displaymath}
Again, consider the graph

\setlength{\unitlength}{5mm}
\begin{picture}(25,12)(-0.5,-6)
\put(-0.5,0){\vector(1,0){23}}
\put(0,-5){\vector(0,1){10}}
\put(22,-0.75){$r$}
\put(0.25,4.5){$g(r)$}
\put(15,0.75){$g(r):=e^{-rt}\left(1-c e^{-rt}\right)$}
\put(0.1,-4.5){$1-c$}
\put(2.5,-0.75){$\frac{\ln c}{t}$}
\put(4.5,-0.75){$\frac{\ln 2c}{t}$}
\put(0.1,2.4){$\frac{1}{4c}$}
\multiput(0,2)(0.4,0){13}
{\line(1,0){0.2}}
\multiput(5,0)(0,0.4){5}
{\line(0,1){0.2}}
\linethickness{0.2mm}
\qbezier(0,-4)(3,-4)(4,0)
\qbezier(4,0)(4.5,2)(5,2)
\qbezier(5,2)(5,2)(6,1.5)
\qbezier(6,1.5)(9,0)(22,0.1)
\end{picture}
We need to have
\begin{displaymath}
	-\int_0^{\frac{\ln c}{t}} e^{-rt}\left(1-c e^{-rt}\right) \, d\tilde{\nu}(r) \geq \int_{\frac{\ln c}{t}}^\infty e^{-rt}\left(1-c e^{-rt}\right) \, d\tilde{\nu}(r).
\end{displaymath}
Observe that if $c \geq 2$, then we have 
\begin{equation}
	\frac{\ln 2c}{2t} \leq \frac{\ln c}{t} \qquad \qquad (\forall t>0).
\label{eq:logc}
\end{equation}
Let $R$ be given by \eqref{eq:delta2}. Then we have
\begin{align*}
	-\int_0^{\frac{\ln c}{t}} e^{-rt}\left(1-c e^{-rt}\right) \, d\tilde{\nu}(r) &\geq \tilde{\nu} \left[0, \frac{\ln 2c}{2t}\right) \left(c e^{-\ln 2c} - e^{-\frac{\ln 2c}{2}}\right) \\
	&\stackrel{\eqref{eq:delta2}}{\geq} \frac{\tilde{\nu}\left[0, \frac{\ln 2c}{t}\right)}{R} \left(\frac{1}{2} - \frac{1}{\sqrt{2c}}\right),
\end{align*}
and
\begin{align*}
	\int_{\frac{\ln c}{t}}^{\frac{\ln 2c}{t}} e^{-rt}\left(1-c e^{-rt}\right) \, d\tilde{\nu}(r) &\leq \tilde{\nu}\left[\frac{\ln c}{t}, \, \frac{\ln 2c}{t}\right) e^{-\ln 2c}\left(1-c e^{-\ln 2c}\right) \\
	& = \frac{1}{4c} \left(\tilde{\nu}\left[0, \, \frac{\ln 2c}{t}\right)-\tilde{\nu}\left[0, \, \frac{\ln c}{t}\right)\right) \\
	& \leq \frac{R-1}{4R c}\tilde{\nu}\left[0, \frac{\ln 2c}{t}\right),
	\end{align*}
because
\begin{displaymath}
	- \tilde{\nu}\left[0, \frac{\ln c}{t}\right) \stackrel{\eqref{eq:logc}}{\leq} - \tilde{\nu}\left[0, \frac{\ln 2c}{2t}\right) \stackrel{\eqref{eq:delta2}}{\leq} -\frac{\tilde{\nu}\left[0, \frac{\ln 2c}{t}\right)}{R},
\end{displaymath}
and for $c > R/2$ we have
\begin{align*}
\int_{\frac{\ln 2c}{t}}^\infty e^{-rt}\left(1-c e^{-rt}\right) \, d\tilde{\nu}(r) &\leq \sum_{k=0}^\infty \tilde{\nu}\left[2^k \frac{\ln 2c}{t}, \, 2^{k+1} \frac{\ln 2c}{t} \right) e^{-2^k\ln 2c}\left(1-c e^{-2^k\ln 2c}\right) \\
&\stackrel{\eqref{eq:delta2}}{\leq} (R-1) \tilde{\nu}\left[0, \, \frac{\ln 2c}{t} \right) \sum_{k=0}^\infty \frac{R^k}{(2c)^{2^k}}\left(1-\frac{c}{(2c)^{2^k}}\right) \\
&\leq (R-1) \tilde{\nu}\left[0, \, \frac{\ln 2c}{t} \right) \frac{1}{2c} \sum_{k=0}^\infty \left(\frac{R}{2c}\right)^k \\
&= \frac{R-1}{2c-R} \tilde{\nu}\left[0, \, \frac{\ln 2c}{t} \right).
\end{align*}
Putting these inequalities together, we get
\begin{displaymath}
	R \left(\frac{R-1}{2c-R} + \frac{R-1}{4R c}\right) \leq \frac{1}{2}-\frac{1}{\sqrt{2c}},
\end{displaymath}
which holds for sufficiently large $c$, since the LHS approaches 0 and the RHS approaches $1/2$ as $c$ goes to infinity.
\end{proof}

\begin{lemma}
The normalised reproducing kernels $k^{A^2_\nu}_z/\left\|k^{A^2_\nu}_z\right\|$ tend to 0 weakly as $z$ approaches infinity.
\end{lemma}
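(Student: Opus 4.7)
The plan is to exploit the density of $\spans\{k_\zeta^{A^2_\nu} : \zeta \in \mathbb{C}_+\}$ in $A^2_\nu$ (anything orthogonal to every $k_\zeta^{A^2_\nu}$ vanishes identically by the reproducing property) together with the fact that the normalised family $\{k_z^{A^2_\nu}/\|k_z^{A^2_\nu}\|\}$ has unit norm. A standard $\varepsilon$-approximation then reduces weak convergence to zero to testing against a single reproducing kernel, so I need to show that for each fixed $\zeta \in \mathbb{C}_+$,
\[
\frac{|\langle k_z^{A^2_\nu}, k_\zeta^{A^2_\nu}\rangle|}{\|k_z^{A^2_\nu}\|} = \frac{|k_\zeta^{A^2_\nu}(z)|}{\|k_z^{A^2_\nu}\|} \longrightarrow 0 \quad \text{as } z \to \infty.
\]

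First I would show that $\|k_z^{A^2_\nu}\| \to 0$ as $\re(z) \to \infty$. Indeed, \eqref{eq:kernel} gives $\|k_z^{A^2_\nu}\|^2 = \int_0^\infty e^{-2t\re(z)}/w(t)\, dt$, and for $\re(z) \geq x_0 > 0$ the integrand is dominated by $e^{-2tx_0}/w(t)$, which is integrable (it equals $\|k_{x_0}^{A^2_\nu}\|^2 < \infty$) and vanishes pointwise as $\re(z) \to \infty$; dominated convergence applies.

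The crux is bounding $|k_\zeta^{A^2_\nu}(z)|$ by a constant multiple of $\|k_z^{A^2_\nu}\|^2$. Writing $x := \re(z)$ and $a := \re(\zeta) > 0$, we have $|k_\zeta^{A^2_\nu}(z)| \leq \int_0^\infty e^{-t(x+a)}/w(t)\, dt$, and the substitution $t = 2ux/(x+a)$ turns the right-hand side into
\[
\frac{2x}{x+a}\int_0^\infty \frac{e^{-2ux}}{w(2ux/(x+a))}\, du.
\]
Since $2ux/(x+a) \leq 2u$ always, monotonicity of $w$ together with the freshly proved inequality \eqref{eq:wmestimate} give $1/w(2ux/(x+a)) \leq 1/w(2u) \leq c/w(u)$; combined with $2x/(x+a) \leq 2$ this yields $|k_\zeta^{A^2_\nu}(z)| \leq 2c\, \|k_z^{A^2_\nu}\|^2$ uniformly in $z$. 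Hence $|k_\zeta^{A^2_\nu}(z)|/\|k_z^{A^2_\nu}\| \leq 2c\, \|k_z^{A^2_\nu}\| \to 0$ by the first step, completing the verification on a dense subspace and hence establishing weak convergence.

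The main obstacle is this integral estimate: the density reduction and the norm decay $\|k_z^{A^2_\nu}\| \to 0$ are routine, but obtaining $|k_\zeta^{A^2_\nu}(z)| \leq C\, \|k_z^{A^2_\nu}\|^2$ requires the specific rescaling $t \mapsto 2ux/(x+a)$ chosen precisely so that the just-established $\Delta_2$-type inequality \eqref{eq:wmestimate} can be brought to bear. Without that lemma, controlling this ratio would be genuinely delicate.
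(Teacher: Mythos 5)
Your reduction to testing against a single reproducing kernel is sound (the span of the $k_\zeta^{A^2_\nu}$ is dense and the normalised family is bounded in norm), and your crux estimate $|k_\zeta^{A^2_\nu}(z)| \leq 2c\,\|k_z^{A^2_\nu}\|^2$ is correct and is essentially the same computation the paper performs (the paper drops the factor $e^{-t\re(\zeta)}$, substitutes $t \mapsto 2t$, and invokes \eqref{eq:wmestimate}; your rescaling by $2x/(x+a)$ is an equivalent variant). However, there is a genuine gap: your argument only proves the conclusion when $\re(z) \to \infty$. The lemma asserts weak convergence as $z \to \infty$ in any fashion, and this includes sequences with $\im(z) \to \infty$ while $\re(z)$ stays bounded, say $0 < \re(z) \leq a$. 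In that regime your chain of inequalities is useless: $\|k_z^{A^2_\nu}\|^2 = \int_0^\infty e^{-2t\re(z)}/w(t)\,dt$ depends only on $\re(z)$ and is decreasing in it, so for $\re(z) \leq a$ one has $\|k_z^{A^2_\nu}\| \geq \|k_a^{A^2_\nu}\| > 0$, and the bound $|k_\zeta^{A^2_\nu}(z)|/\|k_z^{A^2_\nu}\| \leq 2c\,\|k_z^{A^2_\nu}\|$ does not tend to zero. This case cannot be dismissed: the lemma is applied in the compactness theorem to a $\limsup$ over arbitrary approaches to infinity.

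The paper closes this case with a different tool. Writing $k_\zeta^{A^2_\nu}(z) = \mathfrak{L}\left[e^{-\cdot\,\overline{\zeta}}/w(\cdot)\right](z)$ and noting that $e^{-t\overline{\zeta}}/w(t)$ lies in $L^1(0,\infty)$ (its $L^1$ norm is $\|k_{\zeta/2}^{A^2_\nu}\|^2 < \infty$), the Riemann--Lebesgue lemma for the Laplace transform gives $|k_\zeta^{A^2_\nu}(z)| \to 0$ as $z \to \infty$ in the closed half-plane; combined with the lower bound $\|k_z^{A^2_\nu}\| \geq \|k_a^{A^2_\nu}\|$ on the strip $0 < \re(z) \leq a$, the ratio still vanishes. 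You need to add this second branch (or some substitute for it) for your proof to be complete.
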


\begin{proof}
If $z$ approaches $\infty$ unrestrictedly, then either $\re(z) \longrightarrow \infty$ or $\re(z)<a$, for some $a>0$, and $\im(z) \longrightarrow \infty$. Suppose that $\re(z) \longrightarrow \infty$. Then
\begin{align*}
\lim_{\re(z) \rightarrow \infty} \left|k^{A^2_\nu}_z(\zeta)\right|/\left\|k^{A^2_\nu}_z\right\| &\stackrel{\eqref{eq:kernel}}{\leq} \lim_{\re(z) \rightarrow \infty} \int_0^\infty \frac{e^{-t(\re(z)+\re(\zeta))}}{w(t)} \, dt/\left\|k^{A^2_\nu}_z\right\| \\
&\leq \lim_{\re(z) \rightarrow \infty} \int_0^\infty \frac{e^{-t\re(z)}}{w(t)} \, dt/\left\|k^{A^2_\nu}_z\right\| \\
&= 2\lim_{\re(z) \rightarrow \infty} \int_0^\infty \frac{e^{-2t\re(z)}}{w(2t)} \, dt/\left\|k^{A^2_\nu}_z\right\| \\
&\stackrel{\eqref{eq:wmestimate}}{\lessapprox} \lim_{\re(z) \rightarrow \infty} \int_0^\infty \frac{e^{-2t\re(z)}}{w(t)} \, dt/\left\|k^{A^2_\nu}_z\right\| \\
&\stackrel{\eqref{eq:kernel}}{=} \lim_{\re(z) \rightarrow \infty} \left\|k^{A^2_\nu}_z\right\|^2/\left\|k^{A^2_\nu}_z\right\|  \\
&= \lim_{\re(z) \rightarrow \infty}\left\|k^{A^2_\nu}_z\right\| = 0.
\end{align*}
Otherwise $\re(z) \leq a$, for some $0 <a<\infty$, and
\begin{equation}
	\left\|k_z^{A^2_\nu}\right\|^2 \stackrel{\eqref{eq:kernel}}{=} \int_0^\infty \frac{e^{-2t\re(z)}}{w(t)} \, dt \geq \int_0^\infty \frac{e^{-2at}}{w(t)} \stackrel{\eqref{eq:kernel}}{=} \left\|k_a^{A^2_\nu}\right\|^2.
\label{eq:kera}
\end{equation}
Now,
\begin{displaymath}
	\int_0^\infty \left|\frac{e^{-t\overline{\zeta}}}{w(t)}\right| \, dt = \int_0^\infty \frac{e^{-2t\re(\zeta)/2}}{w(t)} \, dt \stackrel{\eqref{eq:kernel}}{=} \left\| k_{\frac{\zeta}{2}}^{A^2_\nu}\right\|^2 < \infty,
\end{displaymath}
so $e^{-\cdot \overline{\zeta}}/w(\cdot)$ is in $L^1(0, \, \infty)$, for all $\zeta \in \mathbb{C}_+$. Therefore, by the Riemann-Lebesgue Lemma for the Laplace transform (Theorem 1, p. 3 in \cite{bochner1949}), we get
\begin{equation}
\lim_{z \rightarrow \infty} \left|k^{A^2_\nu}_z(\zeta)\right| \stackrel{\eqref{eq:kernel}}{=} \lim_{z \rightarrow \infty} \left|\mathfrak{L}\left[\frac{e^{-\cdot \overline{\zeta}}}{w(\cdot)}\right](z)\right| = 0 \quad \Longrightarrow \quad \lim_{\stackrel{\im(z) \longrightarrow \infty}{0<\re(z)<a}} \left|k^{A^2_\nu}_z(\zeta)\right| = 0.
\label{eq:rl}
\end{equation}
And thus
\begin{displaymath}
	\lim_{\stackrel{\im(z)\longrightarrow \infty}{0<\re(z)<a}} \left|k^{A^2_\nu}_z(\zeta)\right|/\left\|k^{A^2_\nu}_z\right\| \stackrel{\eqref{eq:kera}}{\leq} \left\|k_a^{A^2_\nu}\right\|^{-1}\lim_{\stackrel{\im(z)\longrightarrow \infty}{0<\re(z)<a}} \left|k^{A^2_\nu}_z(\zeta)\right| \stackrel{\eqref{eq:rl}}{=} 0.
\end{displaymath}
So in either case we have
\begin{displaymath}
	\lim_{z\rightarrow \infty} \left|k^{A^2_\nu}_z(\zeta)\right|/\left\|k^{A^2_\nu}_z\right\| = 0.
\end{displaymath}
\end{proof}

\begin{definition}
The \emph{essential norm of an operator}, denoted $\| \cdot \|_e$ is the distance in the operator norm from the set of compact operators.
\end{definition}

\begin{theorem}
There is no compact composition operator on $A^2_\nu$.
\end{theorem}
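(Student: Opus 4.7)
The plan is to bound the essential norm of $C_\varphi$ from below using the standard weakly-null sequence criterion, together with the explicit action of $C_\varphi^*$ on reproducing kernels and the scale-invariance of the weight $w$ supplied by the previous lemma. An unbounded operator is certainly not compact, so I may assume $C_\varphi$ is bounded, and then by Theorem \ref{thm:Zenboundedness} the symbol $\varphi$ has a finite angular derivative $0 < \lambda < \infty$ at infinity.

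First I would note that $C_\varphi^* k^{A^2_\nu}_z = k^{A^2_\nu}_{\varphi(z)}$ (this follows from $\langle C_\varphi f, k^{A^2_\nu}_z\rangle = f(\varphi(z)) = \langle f, k^{A^2_\nu}_{\varphi(z)}\rangle$, or from the $h\equiv 1$ case of Remark \ref{remark:1}). If $C_\varphi$ were compact then so would be $C_\varphi^*$; combining this with the preceding lemma (which says the normalised kernels tend to $0$ weakly whenever $z_n \to \infty$) would force $\|k^{A^2_\nu}_{\varphi(z_n)}\|/\|k^{A^2_\nu}_{z_n}\| \to 0$ for every such sequence. To derive a contradiction I would exhibit one sequence along which this ratio stays bounded below by a positive constant.

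Take $z_n = x_n$ on the positive real axis with $x_n \to \infty$. By the Julia--Carath\'eodory theorem (Proposition \ref{prop:elliott}) one has $\re(\varphi(x_n))/x_n \to 1/\lambda$, so for all sufficiently large $n$, $\re(\varphi(x_n)) \leq Mx_n$, where $M := 2/\lambda$. Writing $\phi(s) := \int_0^\infty e^{-2ts}/w(t)\,dt$, so that $\|k^{A^2_\nu}_z\|^2 = \phi(\re(z))$, and using that $\phi$ is decreasing, I obtain
\[
	\frac{\|k^{A^2_\nu}_{\varphi(x_n)}\|^2}{\|k^{A^2_\nu}_{x_n}\|^2} = \frac{\phi(\re(\varphi(x_n)))}{\phi(x_n)} \geq \frac{\phi(Mx_n)}{\phi(x_n)}.
\]
The key step is to bound $\phi(Mx)/\phi(x)$ below uniformly in $x$. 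If $M \leq 1$ this is trivially at least $1$. If $M > 1$ I would iterate the estimate $w(t/2) \leq cw(t)$ from the previous lemma to obtain $w(t/M) \leq M^{\log_2 c}w(t)$, substitute $s = Mt$ in the integral defining $\phi(Mx)$, and conclude
\[
	\phi(Mx) = \frac{1}{M}\int_0^\infty \frac{e^{-2sx}}{w(s/M)}\,ds \geq M^{-1-\log_2 c}\,\phi(x),
\]
which is a strictly positive multiple of $\phi(x)$ independent of $x$.

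The main obstacle I expect is precisely this transfer from the $\Delta_2$-type scale invariance of $w$ to a corresponding scale invariance of its (essentially) Laplace transform $\phi$, carried out without any additional hypothesis on the behaviour of $w$ at $0$ or at $\infty$. Once that bound is in place, combining everything yields $\limsup_n \|k^{A^2_\nu}_{\varphi(x_n)}\|/\|k^{A^2_\nu}_{x_n}\| > 0$, contradicting compactness of $C_\varphi^*$ and completing the proof.
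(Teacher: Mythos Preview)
Your argument is correct and follows the same opening moves as the paper: reduce to bounded $C_\varphi$, use $C_\varphi^* k_z = k_{\varphi(z)}$ together with the weak-null lemma for normalised kernels, and then argue that the ratio $\|k^{A^2_\nu}_{\varphi(z)}\|/\|k^{A^2_\nu}_z\|$ cannot tend to $0$ along a suitable sequence. Where you diverge is in the last step. The paper does not estimate $\phi(Mx)/\phi(x)$ directly; instead it introduces the auxiliary map $\psi(z)=\kappa z$ with $0<\kappa<\lambda$, observes that $C_\psi$ is bounded on $A^2_\nu$ (by Theorem~\ref{thm:Zenboundedness}), and then chains the kernel inequalities to obtain $\|C_\psi\|^2 \geq \|k_z\|^2/\|k_{\varphi(z)}\|^2 > 1/\varepsilon^2$ for every $\varepsilon>0$, a contradiction. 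Your route avoids invoking the boundedness theorem again and instead uses the doubling estimate $w(t/2)\le c\,w(t)$ (the first of the two technical lemmata) to control $\phi(Mx)/\phi(x)$ by a change of variable. This is arguably more transparent and keeps the compactness proof independent of the norm estimates in Section~\ref{sec:causality}; the paper's version, on the other hand, packages the scale-comparison into an operator norm and so sidesteps any explicit manipulation of $w$.

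Two small points worth tightening. First, the bound $w(t/M)\le M^{\log_2 c}w(t)$ is exact only for dyadic $M$; for general $M>1$ one iterates to $2^{\lceil\log_2 M\rceil}$ and then uses monotonicity of $w$, giving a constant like $c^{\lceil\log_2 M\rceil}$---still positive and independent of $t$, so the conclusion is unaffected. Second, when you write ``the previous lemma'' for the $w$-doubling estimate, note that the immediately preceding lemma is the weak-null statement; the doubling estimate is the lemma before that.
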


\begin{proof}
Let $C_\varphi$ be a bounded operator on $A^2_\nu$, and denote its angular derivative at infinity by $\lambda$. For any $\delta>0$ we can choose a compact operator $Q$ such that $\|C_\varphi \|_e + \delta \geq \|C_\varphi-Q\|$. By the previous lemma, the sequence $k^{A^2_\nu}_z/\|k^{A^2_\nu}_z\|$ tends to 0 weakly, as $z$ approaches infinity, so $Q^*\left(k^{A^2_\nu}_z/\|k^{A^2_\nu}_z\|\right) \longrightarrow 0$, and consequently
\begin{align*}
\|C_\varphi \|_e + \delta &\geq \|C_\varphi-Q\| \geq \limsup_{z \longrightarrow \infty} \frac{\left\|(C_\varphi-Q)^*k^{A^2_\nu}_z\right\|}{\left\|k^{A^2_\nu}_z\right\|} \\
&= \limsup_{z \longrightarrow \infty} \frac{\left\|C^*_\varphi k^{A^2_\nu}_z\right\|}{\left\|k^{A^2_\nu}_z\right\|} = \limsup_{z \longrightarrow \infty} \frac{\left\| k^{A^2_\nu}_{\varphi(z)}\right\|}{\left\|k^{A^2_\nu}_z\right\|}.
\end{align*}
Suppose, for contradiction, that $C_\varphi$ is compact, then the last quantity above must be equal to 0, and hence the limit of $\| k^{A^2_\nu}_{\varphi(z)}\|/\|k^{A^2_\nu}_z\|$ exists and is also equal to 0. That is, for each $\varepsilon>0$ there exists $z_0 \in \mathbb{C}_+$ such that
\begin{equation}
	\frac{\left\| k^{A^2_\nu}_{\varphi(z)}\right\|}{\left\|k^{A^2_\nu}_z\right\|} \stackrel{\eqref{eq:kernel}}{=} \sqrt{\frac{\int_0^\infty \frac{e^{-2t\re(\varphi(z))}}{w(t)} dt}{\int_0^\infty \frac{e^{-2t\re(z)}}{w(t)} dt}} < \varepsilon,
\label{eq:epsilon}
\end{equation}
for all $z \in \mathbb{C}_+$ with $|z| \geq |z_0|$. Since $C_\varphi$ is bounded and
\begin{displaymath}
	\lambda = \limsup_{z\longrightarrow\infty} \frac{\re(z)}{\re(\varphi(z))},
\end{displaymath}
for any $0<\kappa<\lambda$ there exists a sequence $(z_j)_{j=1}^\infty$ with $|z_j| \geq |z_0|$, for all $j \geq 0$, such that
\begin{equation}
	\frac{\re(z)}{\re(\varphi(z))} > \kappa \qquad \qquad \left(\forall z \in \{z_j\}_{j=1}^\infty \right).
\label{eq:kappa}
\end{equation}
Let $\psi(z)=\kappa z$. If $z \in \left\{z_j\right\}_{j=1}^\infty$, then
\begin{displaymath}
	\|C_\psi\|^2 \geq \frac{\left\|k^{A^2_\nu}_{\psi(\varphi(z))}\right\|^2}{\left\|k^{A^2_\nu}_{\varphi(z)}\right\|^2} \stackrel{\eqref{eq:kernel}}{=} \frac{\int_0^\infty \frac{e^{-2t\kappa\re(\varphi(z))}}{w(t)} dt}{\int_0^\infty \frac{e^{-2t\re(\varphi(z))}}{w(t)} dt} \stackrel{\eqref{eq:kappa}}{\geq} \frac{\int_0^\infty \frac{e^{-2\re(z)}}{w(t)} dt}{\int_0^\infty \frac{e^{-2t\re(\varphi(z))}}{w(t)} dt} \stackrel{\eqref{eq:epsilon}}{>} \frac{1}{\varepsilon^2},
\end{displaymath}
which is absurd, because $C_\psi$ is bounded on $A^2_\nu$ by Theorem \ref{thm:Zenboundedness} and $\varepsilon$ in the expression above can be chosen to be arbitrarily large. So $\|C_\varphi\|_e > 0$, and consequently $C_\varphi$ is not compact.
\end{proof}

\textbf{Acknowledgement} The author of this article would like to thank the UK Engineering and Physical Sciences Research Council (EPSRC) and the School of Mathematics at the University of Leeds for their financial support. He is also very grateful to Professor Jonathan R. Partington for all the important comments and help in preparation of this research paper.

\end{document}